\numberwithin{equation}{section}
\newtheorem{Theorem}{Theorem}[section]
\newtheorem{Proposition}[Theorem]{Proposition}
\newtheorem{cor}[Theorem]{Corollary}
\newtheorem{lemma}[Theorem]{Lemma}
\theoremstyle{remark}
\newtheorem{Example}[Theorem]{Example}
\begin{document}
\title{On left $\phi$-biprojectivity and left $\phi$-biflatness of certain Banach algebras}

\author[A. Sahami]{A. Sahami}
\address{Department of Mathematics Faculty of Basic Science, Ilam University, P.O. Box 69315-516 Ilam, Iran.}
\email{a.sahami@ilam.ac.ir}

\keywords{Left $\phi$-biflatness, Left $\phi$-biprojectivity, Banach algebras, Locally ompact groups.}

\subjclass[2010]{Primary 46M10,  46H05  Secondary 43A07, 43A20.}

\begin{abstract}
In this paper, we study left $\phi$-biflatness and left $\phi$-biprojectivity of some Banach algebras, where $\phi$ is a non-zero multiplicative linear function. We show that if the Banach algebra $A^{**}$ is left $\phi$-biprojective, then $A$ is left $\phi$-biflat. Using this tool we study left $\phi$-biflatness of some matrix algebras. We also study left $\phi$-biflatness and left $\phi$-biprojectivity of the projective tensor product of some Banach algebras. We prove that  for a locally compact group $G$, $M(G)\otimes_{p} A(G)$ is left $\phi\otimes \psi$-biprojective if and only if $G$ is finite. We show that  $M(G)\otimes_{p} L^1(G)$ is left $\phi\otimes \psi$-biprojective if and only if $G$ is compact.
\end{abstract}
\maketitle
\section{Introduction and Preliminaries}
Banach homology theory have two
important notions, biflatness and biprojectivity  which have  key role in studying the structure of Banach algebras. A Banach
algebra $A$ is called biflat (biprojective), if there exists a
bounded $A$-bimodule morphism $\rho:A\rightarrow
(A\otimes_{p}A)^{**}$ ($\rho:A\rightarrow A\otimes_{p}A$) such that
$\pi_{A}^{**}\circ\rho$ is the canonical embedding of $A$ into
$A^{**}$ ($\rho$ is a right inverse for $\pi_{A}$), respectively.  
It is well known that for a locally compact group $G$, the group algebra  $L^{1}(G)$ is biflat
(biprojective) if and only if $G$ is amenable (compact),
respectively.
We have to mention that a biflat Banach algebra $A$ with a buonded approximate identity is amenable and vise versa, see \cite{run}.

A  Banach algebra $A$ is called left
$\phi$-amenable, if there exists a bounded net $(a_{\alpha})$ in $A$
such that $aa_{\alpha}-\phi(a)a_{\alpha}\rightarrow 0$ and
$\phi(a_{\alpha})\rightarrow 1$ for all $a\in A,$ where $\phi
\in\Delta(A).$ For a locally compact group $G$, the Fourier algebra
$A(G)$ is always left $\phi$-amenable. Also the group algebra
$L^{1}(G)$ is left $\phi$-amenable if and only if $G$ is amenable,
for further information see \cite{Sang} and
\cite{Ala}.

Following this course, Essmaili et. al. in \cite{rost ch} introduced and studied a biflat-like property related to a multiplicative linear functional, they called it condition $W$ (which we call it here right $\phi$-biflatness). 
The Banach algebra $A$ is called left $\phi$-biflat, if there exists a bounded linear map $\rho:A\rightarrow (A\otimes_{p}A)^{**}$
	such that $$\rho(ab)=\phi(b)\rho(a)=a\cdot \rho(b)$$
	and $$\tilde{\phi}\circ\pi^{**}_{A}\circ\rho(a)=\phi(a),$$
	for each $a,b\in A$.	
We  followed their work and showed that the Segal algebra  $S(G)$ is left  $\phi$-biflat if and only if $G$ is amenable see \cite{sahami left biflat}. also we defined a notion of left $\phi$-biprojectivity for Banach algebras. In fact $A$ Banach algebra is left $\phi$-biprojective if there exists a  bounded linear map $\rho:A\rightarrow A\otimes_{p}A$ such that $$\rho(ab)=a\cdot \rho(b)=\phi(b)\rho(a),\quad \phi\circ\pi_{A}\circ\rho(a)=\phi(a),\quad(a,b\in A).$$
We showed that the Lebesgue-Fourier algebra $LA(G)$ is left $\phi$-contractible if and only if $G$ is compact. Also the Fourier algebra $A(G)$ is left $\phi$-contractible if and only if $G$ is discrete, see \cite{sahami left biprojective}.

In this paper, We show that if the Banach algebra $A^{**}$ is left $\phi$-biprojective, then $A$ is left $\phi$-biflat. Using this tool we study left $\phi$-biflatness of some matrix algebras. We also study left $\phi$-biflatness and left $\phi$-biprojectivity of the projective tensor product of some Banach algebras. We prove that  for a locally compact group $G$, $M(G)\otimes_{p} A(G)$ is left $\phi\otimes \psi$-biprojective if and only if $G$ is finite. We show that  $M(G)\otimes_{p} L^1(G)$ is left $\phi\otimes \psi$-biprojective if and only if $G$ is compact.

We remark some standard notations and definitions that we shall need
in this paper. Let $A$ be a Banach algebra. If $X$ is a Banach
$A$-bimodule, then  $X^{*}$ is also a Banach $A$-bimodule via the
following actions
$$(a\cdot f)(x)=f(x\cdot a) ,\hspace{.25cm}(f\cdot a)(x)=f(a\cdot x ) \hspace{.5cm}(a\in A,x\in X,f\in X^{*}). $$

Throughout, the
character space of $A$ is denoted by $\Delta(A)$,  that is, all
non-zero multiplicative linear functionals on $A$. Let $\phi\in
\Delta(A)$. Then $\phi$ has a unique extension   $\tilde{\phi}\in\Delta(A^{**})$
which is defined by $\tilde{\phi}(F)=F(\phi)$ for every
$F\in A^{**}$.

Let $A$ be a  Banach algebra. The projective tensor product
$A\otimes_{p}A$ is a Banach $A$-bimodule via the following actions
$$a\cdot(b\otimes c)=ab\otimes c,~~~(b\otimes c)\cdot a=b\otimes
ca\hspace{.5cm}(a, b, c\in A).$$
For Banach algebras $A$ and $B$ with $\phi\in\Delta(A)$ and $\psi\in\Delta(B)$, we denote $\phi\otimes\psi$ for a multiplicative linear functional on $A\otimes_{p}B$ given by $\phi\otimes\psi(a\otimes b)=\phi(a)\psi(b)$ for each $a\in A$ and $b\in B.$
The product morphism $\pi_{A}:A\otimes_{p}A\rightarrow A$ is given by $\pi_{A}(a\otimes b)=ab,$ for every $a,b\in A.$
Let $X$ and $Y$ be Banach $A$-bimodules. The map $T:X\rightarrow Y$ is called $A$-bimodule morphism, if
$$T(a\cdot x)=a\cdot T(x),\quad T(x\cdot a)=T(x)\cdot a,\qquad (a\in A,x\in X).$$

\section{Some general properties}
Let $A$ be a Banach algebra and $\phi\in\Delta(A)$. $A$ is called 
	approximate left $\phi$-biprojective if there exists a net of
bounded linear maps from $A$ into $A\otimes_{p}A$, say
$(\rho_{\alpha})_{\alpha\in I}$, such that
\begin{enumerate}
	\item [(i)] $a\cdot \rho_{\alpha}(b)-\rho_{\alpha}(ab)\xrightarrow{||\cdot||} 0$,
	\item [(ii)] $\rho_{\alpha}(ba)-\phi(a)\rho_{\alpha}(b)\xrightarrow{||\cdot||} 0$,
	\item [(iii)] $\phi\circ\pi_{A}\circ\rho_{\alpha}(a)-\phi(a)\rightarrow
	0$,
\end{enumerate}
for every $a,b\in A$, see \cite{sahami approximate left}.
\begin{Proposition}\label{approximate left bi}
	Let $A$ be a left $\phi$-biflat Banach algebra. Then $A$ is approximate left $\phi$-biprojective.
\end{Proposition}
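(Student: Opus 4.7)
The plan is to construct, for each finite $F\subset A$ and each $\varepsilon>0$, a single bounded linear map $\rho_{F,\varepsilon}:A\to A\otimes_p A$ approximately satisfying (i)--(iii) on $F$ up to tolerance $\varepsilon$; ordering such maps by $(F,\varepsilon)\le(F',\varepsilon')\iff F\subseteq F',\ \varepsilon\ge\varepsilon'$ then yields the required approximate left $\phi$-biprojective net. The two ingredients are Goldstine's theorem, which replaces the left $\phi$-biflat map $\rho:A\to(A\otimes_p A)^{\ast\ast}$ by honest maps into $A\otimes_p A$, and Mazur's theorem, which upgrades the resulting weak convergences to norm convergences.

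Fix $F$ and $\varepsilon$. Set $V:=\mathrm{span}(F\cup\{ab,ba:a,b\in F\})$, a finite-dimensional subspace of $A$, let $c_1,\dots,c_n$ be a basis of $V$, and, by Hahn--Banach, pick $\psi_1,\dots,\psi_n\in A^\ast$ with $\psi_i(c_j)=\delta_{ij}$. By Goldstine's theorem applied componentwise in $(A\otimes_p A)^n\hookrightarrow((A\otimes_p A)^{\ast\ast})^n$, there is a net $(u_{1,\lambda},\dots,u_{n,\lambda})_{\lambda\in\Lambda}$ in $(A\otimes_p A)^n$ with $u_{i,\lambda}\xrightarrow{w^\ast}\rho(c_i)$ for each $i$ simultaneously. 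Define $\sigma_\lambda:A\to A\otimes_p A$ by $\sigma_\lambda(a):=\sum_{i=1}^n\psi_i(a)u_{i,\lambda}$; this is bounded and linear, and by linearity of $\rho$ satisfies $\sigma_\lambda(c)\xrightarrow{w^\ast}\rho(c)$ in $(A\otimes_p A)^{\ast\ast}$ for every $c\in V$. Since the left $A$-action on $(A\otimes_p A)^{\ast\ast}$ is separately weak-$\ast$ continuous, scalar multiplication is trivially continuous, and $\phi\circ\pi_A$ lies in $(A\otimes_p A)^\ast$, the biflatness identities give, for every $a,b\in F$ (note $ab,ba\in V$),
\begin{align*}
a\cdot\sigma_\lambda(b)-\sigma_\lambda(ab) &\xrightarrow{w^\ast} a\cdot\rho(b)-\rho(ab)=0,\\
\sigma_\lambda(ba)-\phi(a)\sigma_\lambda(b) &\xrightarrow{w^\ast} \rho(ba)-\phi(a)\rho(b)=0,\\
\phi\circ\pi_A(\sigma_\lambda(a))-\phi(a) &\to \tilde\phi(\pi_A^{\ast\ast}\rho(a))-\phi(a)=0.
\end{align*}
The first two sequences lie in $A\otimes_p A$ with weak-$\ast$ limit $0\in A\otimes_p A$, so the convergence is weak convergence in $A\otimes_p A$.

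Finally, invoke the joint form of Mazur's theorem in the product Banach space consisting of one copy of $A\otimes_p A$ for each of the two first-type expressions indexed by $(a,b)\in F\times F$, together with one scalar coordinate for each $a\in F$: the combined net in this product converges weakly to $0$, so a single finite convex combination $\tau:=\sum_\lambda\alpha_\lambda\sigma_\lambda$ (with $\alpha_\lambda\ge0$, $\sum\alpha_\lambda=1$) has all coordinates of norm, respectively modulus, less than $\varepsilon$. Since finite convex combinations of bounded linear maps are bounded linear maps, $\tau:A\to A\otimes_p A$ is of the required form, and we set $\rho_{F,\varepsilon}:=\tau$. The main technical obstacle is coordinating the joint Goldstine step (a single index $\lambda$ must approximate all basis vectors of $V$) with the joint Mazur step (one convex combination must make all finitely many differences small simultaneously); both become standard once the appropriate product Banach spaces are set up, and the Hahn--Banach-extended dual basis $\psi_1,\dots,\psi_n$ takes care of extending from $V$ to $A$.
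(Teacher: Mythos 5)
Your proof is correct and follows essentially the same route as the paper's: approximate the biflatness map $\rho$ pointwise weak-$\ast$ by bounded linear maps into $A\otimes_p A$, then apply Mazur's theorem in a finite product space to convert the resulting weak convergences into norm-small convex combinations, indexed over pairs $(F,\varepsilon)$. The only difference is that where the paper simply invokes the $W^{\ast}OT$-density of $B(A,A\otimes_p A)$ in $B(A,(A\otimes_p A)^{\ast\ast})\cong (A\otimes_p (A\otimes_p A)^{\ast})^{\ast}$, you build the approximating maps explicitly as finite-rank maps via Goldstine's theorem and a Hahn--Banach dual basis, which in effect supplies the justification the paper leaves implicit.
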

\begin{proof}
Since $A$ is left $\phi$-biflat, there exists a bounded linear map $\rho:A\rightarrow (A\otimes_{p}A)^{**}$ such that $\rho(ab)=a\cdot \rho(b)=\phi(b)\rho(a)$ and  $\tilde{\phi}\circ\pi^{**}_{A}\circ\rho(a)=\phi(a)$. Since $\rho\in B(A,(A\otimes_{p}A)^{**})$, there exists a net $\rho_{\alpha}\in B(A,A\otimes_{p}A)$ such that $\rho_{\alpha}\xrightarrow{W^*OT}\rho$. Thus for each $a\in A$ we have $\rho_{\alpha}(a)\xrightarrow{w^*}\rho(a)$. Then 
$$a\cdot \rho_{\alpha}(b)\xrightarrow{w^*}a\cdot \rho(b)=\rho(ab),\quad \rho_{\alpha}(ab)\xrightarrow{w^*}\rho(ab),\quad \phi(b)\rho_{\alpha}(a)\xrightarrow{w^*}\phi(b)\rho(a)=\rho(ab).$$ 
On the other hand, the map $\pi^{**}_{A}$ is a $w^*$-continuous map, so $\pi^{**}_{A}\circ\rho_{\alpha}(a)\xrightarrow{w^*}\pi^{**}_{A}\circ\rho(a)$, for each $a\in A.$  Then $$\phi\circ\pi_{A}\circ\rho_{\alpha}(a)=\tilde{\phi}\circ \pi^{**}_{A}\circ\rho_{\alpha}(a)=\pi^{**}_{A}\circ\rho_{\alpha}(a)(\phi)\rightarrow \pi^{**}_{A}\circ\rho(a)(\phi)=\tilde{\phi}\circ\pi^{**}_{A}\circ\rho(a)=\phi(a).$$ Also foe each $a,b\in A$, we have 
$$a\cdot \rho_{\alpha}(b)\xrightarrow{w^*}a\cdot \rho(b)=\rho(ab),\quad \rho_{\alpha}(ab)\xrightarrow{w^*}\rho(ab),\quad \phi(b)\rho_{\alpha}(a)\xrightarrow{w^*}\phi(b)\rho(a).$$ So $$a\cdot \rho_{\alpha}(b)-\rho_{\alpha}(ab)\xrightarrow{w^*}0,\quad \phi(b)\rho_{\alpha}(a)-\phi(b)\rho_{\alpha}(a)\xrightarrow{w^*}0.$$
Put $F=\{a_{1},a_{2},...,a_{n}\}$ and $G=\{b_{1},b_{2},...,b_{n}\}$ for finite subsets of $A.$ Define
$$M=\{(a_{1}\cdot T(b_{1})-T(a_{1}b_{1}),a_{2}\cdot T(b_{2})-T(a_{2}b_{2}),...,a_{n}\cdot T(b_{n})-T(a_{n}b_{n})):T\in B(A,A\otimes_{p}A)\},$$
it is easy to see that $M$ is a convex subset of $\prod_{i=1}^{n}(A\otimes_{p}A)\oplus_{1}\prod_{i=1}^{n}\mathbb{C}$ and $(0,0,...,0)\in\overline{M}^{w}=\overline{M}^{||\cdot||}.$ It follows that, there exists a net $\xi_{(\epsilon,F,G)}\in B(A,A\otimes_{p}A)$ such that $$||a_{i}\cdot \xi_{(\epsilon,F,G)}(b_{i})-\xi_{(\epsilon,F,G)}(a_{i}b_{i})||<\epsilon,\quad ||\xi_{(\epsilon,F,G)}(a_{i}b_{i})-\phi(b_{i})\xi_{(\epsilon,F,G)}(a_{i})||<\epsilon$$
and $|\phi\circ\pi_{A}\circ\xi_{(\epsilon,F,G)}(a_{i})-\phi(a_{i})|<\epsilon,$
for each $i\in\{1,2,...,n\}$. It follow that the net $(\xi_{(\epsilon,F,G)})$, for each $a,b\in A$, satisfies
$$a\cdot \xi_{(\epsilon,F,G)}-\xi_{(\epsilon,F,G)}(ab)\rightarrow 0,\quad \phi(b) \xi_{(\epsilon,F,G)}(a)-\xi_{(\epsilon,F,G)}(ab)\rightarrow 0$$
and $$\phi\circ\pi_{A}\circ\xi_{(\epsilon,F,G)}(a)-\phi(a)\rightarrow 0.$$ Therefore $A$ is approximately left $\phi-$biprojective. 
\end{proof}
\begin{lemma}
If $A$ is an approximately left $\phi$-biprojective with bounded net $\rho_{\alpha},$ then $A$ is 
left $\phi$-biflat.
\end{lemma}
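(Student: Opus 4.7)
The plan is to pass to a $w^{*}$-limit of the bounded net $(\rho_{\alpha})$ inside the space $B(A,(A\otimes_{p}A)^{**})$ and then verify that this limit witnesses left $\phi$-biflatness of $A$. This essentially reverses the first half of Proposition~\ref{approximate left bi}.

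First I would identify $B(A,(A\otimes_{p}A)^{**})$ with the dual Banach space $(A\otimes_{p}(A\otimes_{p}A)^{*})^{*}$, so that its closed balls are $w^{*}$-compact by Banach--Alaoglu. By hypothesis $(\rho_{\alpha})$ is norm-bounded, so after passing to a subnet we obtain $\rho_{\alpha}\xrightarrow{w^{*}}\rho$ for some $\rho\in B(A,(A\otimes_{p}A)^{**})$. Concretely this means $\rho_{\alpha}(a)\xrightarrow{w^{*}}\rho(a)$ in $(A\otimes_{p}A)^{**}$ for every $a\in A$.

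Next I would pass to the limit in conditions~(i)--(iii) of the definition of approximate left $\phi$-biprojectivity. Fix $a,b\in A$. The left action of $a$ on $(A\otimes_{p}A)^{**}$ is the adjoint of the right action of $a$ on $(A\otimes_{p}A)^{*}$, and hence is $w^{*}$-continuous; consequently $a\cdot\rho_{\alpha}(b)\xrightarrow{w^{*}}a\cdot\rho(b)$. Combined with $\rho_{\alpha}(ab)\xrightarrow{w^{*}}\rho(ab)$ and condition~(i), which gives norm (hence $w^{*}$) convergence of $a\cdot\rho_{\alpha}(b)-\rho_{\alpha}(ab)$ to zero, this forces $a\cdot\rho(b)=\rho(ab)$. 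Applying the same argument to condition~(ii) yields $\phi(b)\rho(a)=\rho(ab)$.

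For the augmentation identity I would use that $\pi_{A}^{**}:(A\otimes_{p}A)^{**}\to A^{**}$ is $w^{*}$-continuous as a bidual map, and that $\tilde{\phi}:A^{**}\to\mathbb{C}$, $F\mapsto F(\phi)$, is $w^{*}$-continuous. Thus $\tilde{\phi}\circ\pi_{A}^{**}\circ\rho_{\alpha}(a)\to\tilde{\phi}\circ\pi_{A}^{**}\circ\rho(a)$; but for each $\alpha$ the left-hand side coincides with $\phi\circ\pi_{A}\circ\rho_{\alpha}(a)$, which by~(iii) tends to $\phi(a)$. Therefore $\tilde{\phi}\circ\pi_{A}^{**}\circ\rho(a)=\phi(a)$, establishing that $A$ is left $\phi$-biflat. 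The only real obstacle is the bookkeeping with $w^{*}$-topologies: one must verify that $B(A,(A\otimes_{p}A)^{**})$ carries the expected dual $w^{*}$-structure, that the fixed-$a$ module actions on $(A\otimes_{p}A)^{**}$ are $w^{*}$-continuous, and that $\pi_{A}^{**}$ is $w^{*}$-continuous. These are standard facts, so no genuinely hard step is anticipated.
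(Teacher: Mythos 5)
Your proposal is correct and follows essentially the same route as the paper: identify $B(A,(A\otimes_{p}A)^{**})$ with the dual space $(A\otimes_{p}(A\otimes_{p}A)^{*})^{*}$, extract a $w^{*}$-limit point $\rho$ of the bounded net via Banach--Alaoglu, and pass to the limit in conditions (i)--(iii) using $w^{*}$-continuity of the fixed-element module actions, of $\pi_{A}^{**}$, and of $\tilde{\phi}$. In fact you supply the continuity justifications that the paper's own (very terse) proof leaves implicit.
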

\begin{proof}
	Let $A$ be approximately left $\phi$-biprojective with bounded net $\rho_{\alpha}.$ So $\rho_{\alpha}\in B(A,(A\otimes_{p}A)^{**})\cong(A\otimes_{p}(A\otimes_{p}A)^{*})^{*}$ has a $w^*$-limit-point, say $\rho.$ Since  $$a\cdot\rho_{\alpha}(b)-\rho_{\alpha}(ab)\rightarrow 0,\quad \phi(b)\rho_{\alpha}(a)-\rho(ab)\rightarrow 0,\quad \phi\circ\pi_{A}\circ\rho_{\alpha}(a)-\phi(a)\rightarrow 0.$$ It follows that $$a\cdot\rho(b)=\rho(ab)=\phi(b)\rho(a),\quad \tilde{\phi}\circ\pi^{**}_{A}\rho(a)=\phi(a),$$
	for each $a\in A.$
\end{proof}
\begin{Proposition}
	Let $A$ be a Banach algebra with an approximate identity and let $\phi\in\Delta(A)$. If $A^{**}$ is approximately biflat, then $A$ is left $\phi$-biflat.
\end{Proposition}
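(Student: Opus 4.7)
The plan is to use the approximately biflat structure on $A^{**}$ to manufacture a bounded approximately left $\phi$-biprojective net on $A$, and then invoke the previous lemma (approximately left $\phi$-biprojective with bounded net $\Rightarrow$ left $\phi$-biflat).

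First I would unpack the hypothesis: approximate biflatness of $A^{**}$ furnishes a bounded net of $A^{**}$-bimodule morphisms $\rho_{\alpha}\colon A^{**}\to (A^{**}\otimes_{p}A^{**})^{**}$ with $\pi_{A^{**}}^{**}\circ\rho_{\alpha}(F)\xrightarrow{w^{*}}F$ for every $F\in A^{**}$. Next I would construct a canonical bounded linear map $\Phi\colon A^{**}\otimes_{p}A^{**}\to (A\otimes_{p}A)^{**}$ from the bilinear map $(F,G)\mapsto F\diamond G$, where $(F\diamond G)(u)=F\bigl(b\mapsto G(u(b))\bigr)$ for $u\in B(A,A^{*})\cong (A\otimes_{p}A)^{*}$; its bidual composed with the canonical contractive projection $(A\otimes_{p}A)^{****}\twoheadrightarrow (A\otimes_{p}A)^{**}$ yields an $A$-bimodule morphism $\widehat\Phi\colon(A^{**}\otimes_{p}A^{**})^{**}\to (A\otimes_{p}A)^{**}$. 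Then $\sigma_{\alpha}=\widehat\Phi\circ\rho_{\alpha}\circ\iota_{A}\colon A\to (A\otimes_{p}A)^{**}$ is a bounded net of candidate approximately left $\phi$-biprojective maps.

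I would then verify the three defining conditions. The bimodule property of $\rho_{\alpha}$ (restricted through $\iota_{A}$) together with the bimodule compatibility of $\widehat\Phi$ immediately yields $\sigma_{\alpha}(ab)=a\cdot\sigma_{\alpha}(b)$. For the $\tilde\phi$-condition, the convergence $\pi_{A^{**}}^{**}\circ\rho_{\alpha}(\iota_{A}(a))\xrightarrow{w^{*}}\iota_{A}(a)$ combined with the identity $\tilde\phi\circ\iota_{A}=\phi$ (immediate from $\tilde\phi(F)=F(\phi)$) gives $\tilde\phi\circ\pi_{A}^{**}\circ\sigma_{\alpha}(a)\to\phi(a)$. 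For the $\phi$-twisted right identity $\sigma_{\alpha}(ab)-\phi(b)\sigma_{\alpha}(a)\to 0$, the ordinary bimodule identity only gives $\sigma_{\alpha}(ab)=\sigma_{\alpha}(a)\cdot b$, so one must force the right action of $b$ to behave as scalar multiplication by $\phi(b)$ in the limit. Here the approximate identity of $A$ is essential: one extracts a weak-$*$ cluster point $E\in A^{**}$ that is a right identity for $A^{**}$, modifies $\sigma_{\alpha}$ by right-multiplying with a suitable $\tilde\phi$-eigen-element constructed from $E$ and $\phi$, and applies a Mazur/convexity argument of the form used in the proof of Proposition~\ref{approximate left bi} to upgrade weak-$*$ convergence to norm convergence while preserving boundedness. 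The previous lemma then gives that $A$ is left $\phi$-biflat.

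The hard part will be this last step: producing the $\phi$-twisted right-action identity from the honest bimodule identity, while keeping the net uniformly bounded. It is precisely here that the character $\phi$ and the approximate identity must interact, and the construction of the modifying element in $A^{**}$ that absorbs the right action as scalar multiplication by $\phi$ is the technical core of the proof.
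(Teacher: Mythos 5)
Your outline identifies the right obstruction --- converting the honest right-module identity $\sigma_{\alpha}(ab)=\sigma_{\alpha}(a)\cdot b$ into the twisted identity $\sigma_{\alpha}(ab)-\phi(b)\sigma_{\alpha}(a)\to 0$ --- but you never actually overcome it, and the device you propose for doing so does not work as stated. A weak-$*$ cluster point $E$ of the approximate identity is only a right identity for $A^{**}$; there is no way to ``construct from $E$ and $\phi$'' an element $m\in A^{**}$ with $am=\phi(a)m$ for all $a\in A$ and $\tilde\phi(m)=1$. The existence of such an eigen-element is \emph{exactly} left $\phi$-amenability of $A$, which is the substantive content that has to be extracted from the hypotheses; it is not a formal consequence of having a right identity. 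So the ``technical core'' you defer is in fact the entire theorem, and your proof has a genuine gap at precisely that point.

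The paper closes this gap by a different and much shorter route: since $A$ has an approximate identity, $\overline{A\ker\phi}=\ker\phi$, and a cited result (Theorem 3.3 of the approximate-biflatness paper of Razi--Pourabbas--Sahami) then converts approximate biflatness of $A^{**}$ directly into left $\phi$-amenability of $A$. Once the eigen-element $m\in A^{**}$ with $am=\phi(a)m$, $\tilde\phi(m)=1$ is in hand, the bimodule morphisms coming from approximate biflatness are discarded entirely: one simply sets $\rho(a)=\phi(a)\,m\otimes m\in A^{**}\otimes_{p}A^{**}$ and pushes it into $(A\otimes_{p}A)^{**}$ via the Ghahramani--Loy--Willis map (your $\widehat\Phi$ plays the same role). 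All three conditions for left $\phi$-biflatness are then immediate identities, with no Mazur argument and no passage through the approximately-left-$\phi$-biprojective lemma. If you want to salvage your approach, you must either prove the $\phi$-amenability transfer yourself or cite it; after that, the elaborate net construction becomes unnecessary.
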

\begin{proof}
	Since $A$ has an approximate identity $\overline{A\ker\phi}=\ker\phi.$ Thus by \cite[Theorem 3.3]{sahami razi biflat} $A$ is left $\phi-$amenable. So there exists an element $m\in A^{**}$ such that $am=\phi(a)m$ and $\tilde{\phi}(m)=1$ for every $a\in A.$ Define $\rho:A\rightarrow A^{**}\otimes_{p}A^{**}$ by $\rho(a)=\phi(a)m\otimes m$. Claerly $\rho$ is a bounded linear map  such that $$a\cdot\rho(b)=\rho(ab)=\phi(b)\rho(a),\quad \tilde{\phi}\circ\pi_{A^{**}}\circ\rho(a)=\phi(a),\quad (a\in A).$$
	There exists a
	bounded linear map $\psi:A^{**}\otimes_{p} A^{**}\rightarrow
	(A\otimes_{p} A)^{**}$ such that for $a,b\in A$ and $m\in
	A^{**}\otimes_{p} A^{**}$, the following holds;
	\begin{enumerate}
		\item [(i)] $\psi(a\otimes b)=a\otimes b $,
		\item [(ii)] $\psi(m)\cdot a=\psi(m\cdot a)$,\qquad
		$a\cdot\psi(m)=\psi(a\cdot m),$
		\item [(iii)] $\pi_{A}^{**}(\psi(m))=\pi_{A^{**}}(m),$
	\end{enumerate}
	see \cite[Lemma 1.7]{gha}.  Set $\eta=\psi\circ \rho:A\rightarrow (A\otimes_{p}A)^{**}$. It is easy to see that 
$a\cdot\eta(b)=\eta(ab)=\phi(b)\eta(a)$
$$\tilde{\phi}\circ\pi^{**}_{A}\circ\eta(a)=\tilde{\phi}\circ  \pi_{A^{**}}\circ\psi\circ\rho(a)=\tilde{\phi}\circ\pi_{A^{**}}\circ\rho(a)=\phi(a),\quad (a\in A).$$
So $A$ is left $\phi$-biflat.
\end{proof}
Let $A$ be a Banach algebra and $I$ be a totally ordered set. By
$UP_{I}(A)$ we denote  the set of $I\times I$ upper triangular
matrices  which its entries come from $A$ and
$$||(a_{i,j})_{i,j\in I}||=\sum_{i,j\in I}||a_{i,j}||<\infty.$$ With
matrix operations and $||\cdot||$ as a norm, $UP_{I}(A)$ becomes a
Banach algebra.
\begin{Proposition}
Let $I$ be a totally ordered set with the greatest  element. Also let $A$ be a Banach algebra with left identity and      $\phi\in\Delta(A).$ Then $UP(I,A)^{**}$ is left $\psi_{\phi}$-biflat if and only if $|I|=1$ and $A$ is left $\phi$-biflat.
\end{Proposition}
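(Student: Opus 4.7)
Let $i_0 \in I$ denote the greatest element and define $\psi_\phi \in \Delta(UP(I,A))$ by $\psi_\phi((a_{i,j})) = \phi(a_{i_0,i_0})$; multiplicativity follows from the upper-triangular product identity $(ab)_{i_0,i_0} = a_{i_0,i_0}b_{i_0,i_0}$. I use the same symbol $\psi_\phi$ for its canonical extension to $UP(I,A)^{**}$. My plan for the main ``only if'' direction is to descend biflatness from $UP(I,A)^{**}$ to $UP(I,A)$, promote it to $\psi_\phi$-amenability, and then use the combinatorics of upper triangular matrices to force $|I|=1$.

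For the descent step, given a left $\psi_\phi$-biflatness map $\rho\colon UP(I,A)^{**} \to \big(UP(I,A)^{**}\otimes_p UP(I,A)^{**}\big)^{**}$, I would restrict to $UP(I,A)$ via the canonical inclusion, apply the double dual of the Ghahramani map $\eta^{**}$ from \cite[Lemma 1.7]{gha}, and project via the canonical contraction $(UP(I,A)\otimes_p UP(I,A))^{****} \to (UP(I,A)\otimes_p UP(I,A))^{**}$. The bimodule and product-map compatibility properties of $\eta$ (items (ii)-(iii) of \cite[Lemma 1.7]{gha}) ensure that the composite realizes left $\psi_\phi$-biflatness of $UP(I,A)$. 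Next, since the left identity of $A$ yields $\overline{UP(I,A)\cdot \ker\psi_\phi} = \ker\psi_\phi$, \cite[Theorem 3.3]{sahami razi biflat} upgrades this to left $\psi_\phi$-amenability of $UP(I,A)$, producing a bounded net $(M_\alpha)$ in $UP(I,A)$ with $TM_\alpha - \psi_\phi(T)M_\alpha \to 0$ for every $T$ and $\psi_\phi(M_\alpha) \to 1$.

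Now assume, for contradiction, $|I| \geq 2$ and pick $i_1 \in I$ with $i_1 < i_0$. Take $T = E_{i_1, i_0}$, the matrix with the left identity $e$ of $A$ at position $(i_1,i_0)$ and zero elsewhere; then $\psi_\phi(T) = 0$, so $TM_\alpha \to 0$. An entrywise computation yields $(TM_\alpha)_{i_1, j} = e\cdot (M_\alpha)_{i_0, j} = (M_\alpha)_{i_0, j}$ for every $j$, forcing row $i_0$ of $M_\alpha$ to vanish in the limit. In particular $(M_\alpha)_{i_0, i_0} \to 0$, contradicting $\psi_\phi(M_\alpha) = \phi\big((M_\alpha)_{i_0, i_0}\big) \to 1$. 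Thus $|I| = 1$, so $UP(I,A) = A$, $\psi_\phi = \phi$, and the descent step has already produced left $\phi$-biflatness of $A$.

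For the converse, when $|I|=1$ and $A$ is left $\phi$-biflat, the left identity of $A$ again gives $\overline{A\ker\phi} = \ker\phi$, so \cite[Theorem 3.3]{sahami razi biflat} yields left $\phi$-amenability of $A$; a $w^*$-cluster point of the amenability net provides $m \in A^{**}$ with $Fm = \widetilde{\phi}(F)m$ for all $F \in A^{**}$ and $\widetilde{\phi}(m) = 1$. Then $\rho(F) = \widetilde{\phi}(F)\, m\otimes m$, viewed inside $(A^{**}\otimes_p A^{**})^{**}$, realizes left $\widetilde{\phi}$-biflatness of $A^{**}$ by the same routine verification as in the proof of the third Proposition above. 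The main obstacle I anticipate is the descent step, where carrying the biflatness identities through the composition with $\eta^{**}$ and the canonical contraction from the fourth dual demands careful bookkeeping of the module actions and of the character identity $\psi_\phi\circ\pi^{**}\circ\rho = \psi_\phi$; the hypothesis that $A$ has a left identity is crucial both here (for the biflat-to-amenable upgrade and for producing the matrix unit $E_{i_1,i_0}$ used in the contradiction) and in the converse construction.
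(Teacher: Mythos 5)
Your proof is correct, and its core is genuinely different from (and cleaner than) the paper's. For the contradiction forcing $|I|=1$, the paper first passes to the closed ideal $J$ of matrices supported on the last column, invokes the hereditary property of left $\phi$-amenability for closed ideals on which the character survives \cite[Lemma 3.1]{kan}, and then tests the resulting net inside $J$ against a matrix whose last column is filled with an element $a_0$ with $\phi(a_0)=1$ but whose $(i_0,i_0)$ entry is $0$. You instead test the left $\psi_{\phi}$-amenability net of the whole algebra against the single matrix unit $E_{i_1,i_0}$: since $\psi_{\phi}(E_{i_1,i_0})=0$ and $(E_{i_1,i_0}M_{\alpha})_{i_1,i_0}=e\,(M_{\alpha})_{i_0,i_0}=(M_{\alpha})_{i_0,i_0}$, the corner entry is killed, contradicting $\phi\big((M_{\alpha})_{i_0,i_0}\big)\to 1$. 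This bypasses the ideal lemma entirely and needs no information about the shape of the net. On the reduction from $UP(I,A)^{**}$ to $UP(I,A)$, you descend biflatness itself via $\eta^{**}$ and the canonical projection from the fourth dual and only then upgrade to amenability, whereas the paper runs the other order (biflatness of the bidual $\Rightarrow$ amenability of the bidual $\Rightarrow$ amenability of the algebra, via \cite{sahami left biflat} and \cite{kan}); both routes work, and since all you use downstream is left $\psi_{\phi}$-amenability of $UP(I,A)$, you could adopt the paper's order and avoid the fourth-dual bookkeeping you flag as the main obstacle. Two points in your favour: you actually prove the converse and the $|I|=1$ case of the forward direction (via $\rho(F)=\tilde{\phi}(F)\,m\otimes m$), both of which the paper's proof omits, and you make $\psi_{\phi}$ explicit where the paper leaves it undefined. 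One citation quibble: the implication ``left $\phi$-biflat plus $\overline{A\ker\phi}=\ker\phi$ implies left $\phi$-amenable'' is what the paper attributes to \cite[Lemma 2.1]{sahami left biflat}; \cite[Theorem 3.3]{sahami razi biflat} is its approximate-biflatness analogue.
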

\begin{proof}
Suppose  $UP_{I}(A)$ is left $\psi_{\phi}$-biflat. Let $i_{0}\in I$ be the greatest
element of $I$ with respect to $\leq$. Since $A$ has a left unit, $UP_{I}(A)$ has a left approximate identity. By \cite[Lemma 2.1]{sahami left biflat} left $\psi_{\phi}$-amenability of $UP_{I}(A)^{**}$ implies that  $UP_{I}(A)$ is left $\psi_{\phi}$-amenable.

Define
$$J=\{(a_{i,j})_{i,j\in I}\in UP_{I}(A)|a_{i,j}=0 \quad
\text{for}\quad j\neq i_{0}\}.$$ Clearly  $J$ is a
closed ideal of $UP_{I}(A)$ with $\psi_{i_{0}}|_{J}\neq 0$.
Applying  \cite[Lemma 3.1]{kan} gives that $J$ is left
$\psi_{i_{0}}$-amenable. So by \cite[Theorem
1.4]{kan} there exists a bounded net $(j_{\alpha})$ in $J$ which satisfies
\begin{equation}\label{left phi}
jj_{\alpha}-\psi_{\phi}(j)j_{\alpha}\rightarrow 0,\quad
\psi_{\phi}(j_{\alpha})=1\quad (j\in J).
\end{equation}
 Suppose in contradiction that $I$ has at least two elements.
 Let $a_{0}$ be an element
in $A$ such that $\phi(a_{0})=1.$
Set
$j=\left(\begin{array}{ccccc} \cdots&0&\cdots&0&a_{0}\\
\cdots&0&\cdots&0&a_{0}\\
\colon&\colon&\colon&\colon&\colon\\
\cdots&0&\cdots&0&a_{0}\\
\colon&\colon&\colon&\colon&0
\end{array}
\right).$
Clearly for each $\alpha$
the net
$j_{\alpha}$ has a  form $\left(\begin{array}{ccccc} \cdots&0&\cdots&0&j_{i}^\alpha\\
\cdots&0&\cdots&0&\cdots\\
\colon&\colon&\colon&\colon&\colon\\
\cdots&0&\cdots&0&j_{k}^\alpha\\
\colon&\colon&\colon&\colon&j_{i_{0}}^\alpha
\end{array}
\right)$,
 where $(j_{i}^\alpha),(j_{k}^\alpha)$ and $(j_{i_{0}}^\alpha)$ are some nets in $A$. Put $j$
and $j_{\alpha} $ in (\ref{left phi}) we have
$j_{i_{0}}^{\alpha}a_{0}\rightarrow 0$. Since $\phi$ is continuous, we
have $\phi(j_{i_0}^{\alpha})\rightarrow 0.$ On the other hand
$\psi_{\phi}(j_{\alpha})=\phi(j_{i_0}^{\alpha})=1$ which is a
contradiction. So $I$ must be singleton and the proof is complete.
\end{proof}
\begin{cor}
	Let $I$ be a totally ordered set with the greatest  element. Also let $A$ be a Banach algebra with left identity and      $\phi\in\Delta(A).$ Then $UP_{I}(A)^{**}$ is approximately biflat if and only if $|I|=1$ and $A$ is approximately biflat.
\end{cor}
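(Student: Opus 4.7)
The strategy is to reduce the corollary to the preceding Proposition. For the forward direction, suppose $UP_I(A)^{**}$ is approximately biflat. Since $A$ has a left identity, $UP_I(A)$ admits a bounded left approximate identity built from finitely supported diagonal matrices carrying the left identity of $A$. Applying Proposition 2.3 with $UP_I(A)$ in the role of $A$---its second dual being approximately biflat by hypothesis, and it having the required approximate identity---we conclude that $UP_I(A)$ is left $\psi_\phi$-biflat for every relevant character $\psi_\phi$. From here the argument is identical to that of the preceding Proposition: left $\psi_\phi$-biflatness together with the left approximate identity yields left $\psi_\phi$-amenability of $UP_I(A)$ via \cite[Lemma 2.1]{sahami left biflat}; the closed ideal $J$ supported in the $i_0$-th column inherits left $\psi_{i_0}$-amenability by \cite[Lemma 3.1]{kan}; and the test matrix $j$ with entries $a_0$ (where $\phi(a_0)=1$) produces the contradiction $\phi(j_{i_{0}}^{\alpha})=1$ while simultaneously $\phi(j_{i_{0}}^{\alpha})\to 0$, unless $|I|=1$.

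Once $|I|=1$ is established, $UP_I(A)^{**}=A^{**}$ is approximately biflat, and I would descend this to $A$ via the standard permanence property: using the canonical embedding $A\hookrightarrow A^{**}$ and the approximate identity afforded by the left identity of $A$, one pulls the approximating net of bimodule morphisms on $A^{**}$ back to an approximating net for $A$. For the converse direction, when $|I|=1$ one has $UP_I(A)=A$, and it remains to show that approximate biflatness of $A$ implies approximate biflatness of $A^{**}$; this is obtained by second-dualizing the approximating net $\rho_\alpha:A\to (A\otimes_p A)^{**}$ to $\rho_\alpha^{**}:A^{**}\to (A\otimes_p A)^{****}$ and composing with the canonical $A^{**}$-bimodule map $(A\otimes_p A)^{****}\to (A^{**}\otimes_p A^{**})^{**}$ afforded by \cite[Lemma 1.7]{gha}.

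The main obstacle will be the ascent--descent step between $A$ and $A^{**}$ when $|I|=1$, since neither direction of this equivalence is isolated as a separate lemma in the excerpt and must be handled carefully using the approximate identity and functorial dualization. The remainder of the argument is a direct transcription of the preceding Proposition, with approximate biflatness serving only as the hypothesis needed to trigger Proposition 2.3.
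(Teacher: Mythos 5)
The paper states this corollary without proof, so your attempt can only be measured against the preceding Proposition, whose matrix argument you reproduce correctly: using the left identity of $A$ to get a left approximate identity in $UP_I(A)$, invoking \cite[Theorem 3.3]{sahami razi biflat} (through the mechanism of Proposition 2.3) to obtain left $\psi_\phi$-amenability of $UP_I(A)$ from approximate biflatness of $UP_I(A)^{**}$, and then running the column-ideal contradiction to force $|I|=1$. That part is sound and is clearly what the author intends.

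The genuine gap is in the two ``ascent--descent'' steps you yourself flag, and in particular in the converse direction. Once $|I|=1$ the corollary reduces to the assertion that $A^{**}$ is approximately biflat if and only if $A$ is, and your proposed proof of the ``only if'' half of that equivalence (i.e.\ the converse of the corollary) rests on a canonical $A^{**}$-bimodule map $(A\otimes_p A)^{****}\to (A^{**}\otimes_p A^{**})^{**}$ which you attribute to \cite[Lemma 1.7]{gha}. That lemma supplies a map in the opposite direction, $\psi\colon A^{**}\otimes_p A^{**}\to (A\otimes_p A)^{**}$, and neither of its adjoints reverses into the map you need: $\psi^{*}$ goes $(A\otimes_p A)^{***}\to (A^{**}\otimes_p A^{**})^{*}$ and $\psi^{**}$ goes $(A^{**}\otimes_p A^{**})^{**}\to (A\otimes_p A)^{****}$. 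So the composition you describe cannot be formed. This is not a cosmetic issue: the analogous implication for (non-approximate) biflatness is false in general --- $K(H)$ is amenable, hence biflat, while $B(H)=K(H)^{**}$ is not amenable --- so any proof that $A$ approximately biflat implies $A^{**}$ approximately biflat must use the standing hypothesis that $A$ has a left identity in an essential way, and your sketch does not. The descent direction ($A^{**}$ approximately biflat implies $A$ approximately biflat) is also left as a gesture; note that the only tool the paper cites, \cite[Theorem 3.3]{sahami razi biflat}, delivers left $\phi$-amenability for individual characters, which is not the same as approximate biflatness of $A$. Both halves of the $|I|=1$ reduction therefore need an actual argument or a precise citation before the corollary is proved.
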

\begin{Example}
We give a Banach algebra which is not left $\phi$-biflat 
  but it is approximate left $\phi$-biprojective. So the converse of Proposition \ref{approximate left bi} is not always true.
 Let denote 
 $\ell^{1} $  for  the set of all sequences $a=((a_{n}))$ of
complex numbers equipped with $||a||=\sum^{\infty}_{n=1}|a_{n}|<\infty $ as its norm.
With the following product:
\begin{eqnarray*}�
	&\textit{(�$a\ast
		b)(n)=$}\begin{cases}a(1)b(1)\,\,\,\,\,\,\,\,\,\,\,\,\,\,\,\,\,\,\,\,\,\,\,\,\hspace{3.5cm}{\hbox {if}}\qquad
		n=1\cr �
		a(1)b(n)+b(1)a(n)+a(n)b(n)\,\,\,\,\,\,\,\,\,\,\,\,\,\,\,\,\,\,\,\,{\hbox {if}}\qquad
		n>1, �\end{cases}\\�
\end{eqnarray*}
$A=(\ell^{1},||\cdot||)$ becomes a Banach algebra. Clearly  $\Delta(\ell^{1})=\{\phi_{1},\phi_{1}+\phi_{n}\}$, where
$\phi_{n}(a)=a(n)$ for every $a\in \ell^{1}$. We claim that $\ell^{1}$ is not left $\phi_{1}$-biflat but $\ell^{1}$ is approximately left $\tilde{\phi}_{1}$-biprojective for some $\phi\in\Delta(\ell^1)$. We assume conversely that 
$\ell^{1}$ is  left $\phi_{1}$-biflat. One can see that $(1,0,0,...)$ is a unit for $\ell^{1}$. Therefore by \cite[Lemma 2.1]{sahami left biflat} left $\phi_{1}$-biflatness of  $\ell^{1}$ implies
that ${\ell^{1}}$ is left $\phi_{1}$-amenable. On the other hand 
by \cite[Example
2.9]{nem col} $\ell^{1}$ is not left $\phi_{1}$-amenable which is a contradiction.

Applying  \cite[Example 2.9]{nem col}, gives that  $\ell^{1}$ is approximate left
$\phi_{1}$-amenable. So \cite[Proposition 2.4]{sahami approximate left} follows that 
that $\ell^{1}$ is approximate left $\phi_{1}$-biprojective.
\end{Example}
\section{Left $\phi$-biprojectivity of the projective tensor product Banach algebras}
\begin{Theorem}
	Let $A$ and $B$ be Banach algebras which $\phi\in\Delta(A)$ and $\psi\in\Delta(B)$. Suppose that $A$ has a unit and $B$ has an identity $x_{0}$ such that $\psi(x_{0})=1.$ If $A\otimes_{p}B$ is left $\phi\otimes\psi$-biflat, then $A$ is left $\phi$-amenable.
\end{Theorem}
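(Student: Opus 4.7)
The plan is to extract a left $\phi$-amenability net for $A$ from a left $\phi\otimes\psi$-amenability net for $A\otimes_p B$, the latter being obtained from the biflatness hypothesis via the bounded-approximate-identity trick already used elsewhere in the paper.

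First, since $A$ has a unit $e_A$ (so automatically $\phi(e_A)=1$) and $B$ has identity $x_0$, the element $e_A\otimes x_0$ is a unit for $A\otimes_p B$, and $(\phi\otimes\psi)(e_A\otimes x_0)=1$. In particular $A\otimes_p B$ carries a bounded approximate identity, so I can feed the hypothesis into \cite[Lemma 2.1]{sahami left biflat} to upgrade left $\phi\otimes\psi$-biflatness of $A\otimes_p B$ to left $\phi\otimes\psi$-amenability. This yields a bounded net $(c_\alpha)\subset A\otimes_p B$ with
$$c\cdot c_\alpha-(\phi\otimes\psi)(c)\,c_\alpha\to 0,\qquad (\phi\otimes\psi)(c_\alpha)\to 1,\qquad (c\in A\otimes_p B).$$

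The key construction is the slice map $T:A\otimes_p B\to A$ induced by $\mathrm{id}_A$ and $\psi$, determined on elementary tensors by $T(a\otimes b)=\psi(b)\,a$. Because $\psi$ is a character, $T$ is a contractive algebra homomorphism that satisfies $\phi\circ T=\phi\otimes\psi$. I would set $a_\alpha:=T(c_\alpha)$; this gives a bounded net in $A$ with $\phi(a_\alpha)=(\phi\otimes\psi)(c_\alpha)\to 1$. For the multiplicative condition, I specialize the amenability property of $(c_\alpha)$ to $c:=a\otimes x_0$, noting $T(c)=\psi(x_0)\,a=a$ and $(\phi\otimes\psi)(c)=\phi(a)$. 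Applying the bounded homomorphism $T$ to the corresponding convergence and using its multiplicativity gives
$$aa_\alpha-\phi(a)a_\alpha=T\bigl((a\otimes x_0)\cdot c_\alpha-\phi(a)\,c_\alpha\bigr)\longrightarrow 0,\qquad (a\in A),$$
which together with the previous display exhibits left $\phi$-amenability of $A$.

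I do not expect a real obstacle. The first step is a direct invocation of the lemma already used repeatedly in the paper to convert left $\phi$-biflatness to left $\phi$-amenability in the presence of a bounded approximate identity. The second step is the standard slice-map argument; the only point to verify is that $\mathrm{id}_A\otimes\psi$ is a well-defined contractive homomorphism on the projective tensor product, which is routine for a character $\psi$ of norm one.
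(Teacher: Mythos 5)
Your proof is correct as a proof of the statement as literally written, but it takes a genuinely different route from the paper's. The paper keeps everything at the level of biflatness: it slices the structure map $\rho$ of $A\otimes_p B$ down to $A$ using a map $\xi:(A\otimes_p B)\otimes_p(A\otimes_p B)\to A\otimes_p A$, $\xi((a\otimes b)\otimes(c\otimes d))=\psi(bd)\,a\otimes c$, restricted to the copy $a\mapsto a\otimes x_0$, thereby proving the intermediate fact that $A$ itself is left $\phi$-biflat, and only then invokes the unit of $A$ (via the same Lemma~2.1 of the left-$\phi$-biflatness preprint that you cite) to pass to left $\phi$-amenability. You instead apply that lemma first, at the level of $A\otimes_p B$, to get a left $\phi\otimes\psi$-amenability net, and then push it into $A$ with the one-variable slice map $\mathrm{id}_A\otimes\psi$; the verification that $T((a\otimes x_0)c_\alpha-\phi(a)c_\alpha)=aa_\alpha-\phi(a)a_\alpha$ is sound because $T$ is a contractive homomorphism with $\phi\circ T=\phi\otimes\psi$. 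Your argument is shorter and avoids the second-dual and double-tensor-product bookkeeping. The one trade-off to be aware of: your first step needs $A\otimes_p B$ to have a bounded approximate identity, which requires $x_0$ to genuinely be an identity (or $B$ to have a b.a.i.), whereas the paper's computation only ever uses that $x_0$ is an idempotent with $\psi(x_0)=1$ (the proof in the paper explicitly says ``for idempotent $x_0$''), so the paper's route proves a formally stronger statement and, as a by-product, records that $A$ is left $\phi$-biflat even without exploiting the unit of $A$ until the last line. Under the hypotheses as stated, both arguments are valid.
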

\begin{proof}
	Let $\rho:A\otimes_{p}B\rightarrow ((A\otimes_{p}B)\otimes_{p}(A\otimes_{p}B))^{**}$ be a bounded linear map such that $$\rho(xy)=x\cdot \rho(y)=\widetilde{\phi\otimes \psi}(y)\rho(x),\quad \widetilde{\phi\otimes \psi}\circ\pi^{**}_{A\otimes_{p}B}\circ\rho(x)=\phi\otimes \psi(x)\quad (x,y\in A\otimes_{p}B).$$
	For idempotent $x_{0}\in B$ and elements  $a_{1},a_{2}\in A$  we have 
		$$a_{1}a_{2}\otimes x_{0}=a_{1}a_{2}\otimes x_{0}=a_{1}a_{2}\otimes x_{0}^{2}=(a_{1}\otimes x_{0})(a_{2}\otimes x_{0}).$$ We denote $e$ for the unit of $A$. So we have 
		  \begin{equation*}
		\begin{split}
		\rho(a_{1}a_{2}\otimes x_{0})=\rho((a_{1}\otimes x_{0})(a_{2}\otimes x_{0}))&=(a_{1}\otimes x_{0})\cdot \rho(a_{2}\otimes x_{0})\\
		&=a_{1}(e\otimes x_{0})\cdot \rho(a_{2}\otimes x_{0})\\
		&=a_{1}\rho(ea_{2}\otimes x_{0}^2),
		\end{split}
		\end{equation*}
		also
		$$\rho(a_{1}a_{2}\otimes x_{0})=\rho((a_{1}\otimes x_{0})(a_{2}\otimes x_{0}))=\phi\otimes\psi(a_{2}\otimes x_{0})\rho(a_{1}\otimes x_{0})=\phi(a_{2})\rho(a_{1}\otimes x_{0})$$
		and 
		$$\widetilde{\phi\otimes \psi}\circ\pi^{**}_{A\otimes_{p}B}\circ\rho(a_{1}\otimes x_{0})=\phi\otimes \psi(a_{1}\otimes x_{0})=\phi(a_{1}),$$
		for each $a_{1},a_{2}\in A.$ Put $\xi:(A\otimes_{p}B)\otimes_{p}(A\otimes_{p}B)\rightarrow A\otimes_{p}A$
		for a bounded linear map which is given by $\xi((a\otimes b)\otimes(c\otimes d)=\psi(bd)a\otimes c,$ for each $a,c\in A$
		and $b,d\in B.$ Clearly $$\pi^{**}_{A}\circ \xi^{**}=(id_{A}\otimes\psi)^{**}\circ \pi^{**}_{A\otimes_{p}B}.$$
		Define  $\theta:A\rightarrow (A\otimes_{p}A)^{**}$ by $\theta(a)=\xi^{**}\circ\rho(a\otimes x_{0})$.
		Clearly $\theta $ is a bounded linear map. We have  $$a\cdot \theta(b)=a\cdot \xi^{**}\circ\rho(b)=\xi^{**}\circ\rho(ab)=\phi(b)\xi^{**}\circ\rho(a)=\phi(b)\theta(a),\quad (a,b\in A).$$
		Also 
		 \begin{equation*}
		\begin{split}
	\tilde{\phi}\circ \pi^{**}_{A}\circ\theta(a)=\tilde{\phi}\circ \pi^{**}_{A}\circ\xi^{**}\circ\rho(a\otimes x_{0})&=\tilde{\phi}\circ (id_{A}\otimes\psi)^{**}\circ \pi^{**}_{A\otimes_{p}B}\circ\rho(a\otimes x_{0})\\
	&=\widetilde{\phi\otimes \psi}\circ \pi^{**}_{A\otimes_{p}B}\circ\rho(a\otimes x_{0})\\
	&=\phi\otimes \psi(a\otimes x_{0})=\phi(a),
		\end{split}
		\end{equation*}
for each $a\in A$. It follows that $A$ is left $\phi$-biflat. Since $A$ has a unit by ... $A$ is left $\phi$-amenable.		
\end{proof}
Note that previous theorem is also valid in the left $\phi$-biprojective case. In fact we have 
\begin{cor}
	Let $A$ and $B$ be Banach algebras which $\phi\in\Delta(A)$ and $\psi\in\Delta(B)$. Suppose that $A$ has a unit and $B$ has an identity $x_{0}$ such that $\psi(x_{0})=1.$ If $A\otimes_{p}B$ is left $\phi\otimes\psi$-biprojective, then $A$ is left $\phi$-contractible.
\end{cor}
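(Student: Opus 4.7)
The plan is to mirror the proof of the preceding theorem, simply replacing the double-dual constructions by their honest (non-bidual) counterparts, and then to invoke the known fact that left $\phi$-biprojectivity collapses to left $\phi$-contractibility in the presence of a unit. Concretely, since $A\otimes_{p}B$ is left $\phi\otimes\psi$-biprojective, there is a bounded linear map $\rho:A\otimes_{p}B\to (A\otimes_{p}B)\otimes_{p}(A\otimes_{p}B)$ with
\[
\rho(xy)=x\cdot\rho(y)=(\phi\otimes\psi)(y)\,\rho(x),\qquad (\phi\otimes\psi)\circ\pi_{A\otimes_{p}B}\circ\rho(x)=(\phi\otimes\psi)(x),
\]
for all $x,y\in A\otimes_{p}B$. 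The key point is that $\rho$ now lands in the actual projective tensor square, so none of the $w^{*}$-limit or $\xi^{**}$ gymnastics from the biflat proof is required.

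Next I would transplant the auxiliary map $\xi$ from the previous theorem verbatim: define the bounded linear map $\xi:(A\otimes_{p}B)\otimes_{p}(A\otimes_{p}B)\to A\otimes_{p}A$ on elementary tensors by $\xi((a\otimes b)\otimes(c\otimes d))=\psi(bd)\,a\otimes c$, and note that $\pi_{A}\circ\xi=(\mathrm{id}_{A}\otimes\psi)\circ\pi_{A\otimes_{p}B}$ (this is the undualized form of the identity used before, proved on elementary tensors). Then set $\theta:A\to A\otimes_{p}A$ by $\theta(a)=\xi\circ\rho(a\otimes x_{0})$. Using the factorisation $a_{1}a_{2}\otimes x_{0}=(a_{1}\otimes x_{0})(a_{2}\otimes x_{0})$ together with the two algebraic identities satisfied by $\rho$, exactly as in the previous proof, one computes
\[
a\cdot\theta(b)=\theta(ab)=\phi(b)\,\theta(a),\qquad \phi\circ\pi_{A}\circ\theta(a)=(\phi\otimes\psi)\circ\pi_{A\otimes_{p}B}\circ\rho(a\otimes x_{0})=\phi(a),
\]
for every $a,b\in A$. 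This is precisely the definition of left $\phi$-biprojectivity of $A$.

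Finally, I would invoke the fact (the left $\phi$-biprojective analogue of \cite[Lemma 2.1]{sahami left biflat}, which was used in the biflat version to go from biflatness plus a unit to $\phi$-amenability) that a unital left $\phi$-biprojective Banach algebra is left $\phi$-contractible: evaluate $\theta$ at the unit $e$ of $A$, let $u=\pi_{A}(\theta(e))\in A$, and check that $au=\phi(a)u$ and $\phi(u)=1$ for all $a\in A$. Since the verification of the two biprojectivity identities for $\theta$ is a line-for-line transcription of the biflat theorem with $\xi^{**}$ replaced by $\xi$ and $\pi^{**}$ by $\pi$, the only genuine content of the corollary is this last step, and that is where I expect to spend whatever attention the proof really requires; the rest is essentially bookkeeping carried over from the theorem above.
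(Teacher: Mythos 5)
Your proposal is correct and matches the paper's intent: the paper gives no separate proof, merely noting that the preceding theorem's argument carries over to the biprojective setting with the biduals stripped away, which is exactly what you do, and your final step (taking $u=\pi_{A}(\theta(e))$ so that $au=\phi(a)u$ and $\phi(u)=1$) correctly supplies the unital left $\phi$-biprojective $\Rightarrow$ left $\phi$-contractible implication that the paper delegates to a cited lemma.
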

\begin{Proposition}
Suppose that $A$ is a Banach algebra and $\phi\in\Delta(A).$	Let $A^{**}$ be left $\tilde{\phi}$-biprojective. Then $A$ is left $\phi$-biflat.
\end{Proposition}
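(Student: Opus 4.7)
The plan is to chain the biprojectivity structure on $A^{**}$ through two natural maps: the canonical embedding $\iota\colon A\hookrightarrow A^{**}$ and the Ghahramani map $\psi\colon A^{**}\otimes_{p}A^{**}\to (A\otimes_{p}A)^{**}$ from \cite[Lemma 1.7]{gha} that was already invoked earlier in this paper. Given that $A^{**}$ is left $\tilde{\phi}$-biprojective, I would fix a bounded linear map $\rho\colon A^{**}\to A^{**}\otimes_{p}A^{**}$ satisfying
\[
\rho(FG)=F\cdot\rho(G)=\tilde{\phi}(G)\rho(F),\qquad \tilde{\phi}\circ\pi_{A^{**}}\circ\rho(F)=\tilde{\phi}(F),
\]
for all $F,G\in A^{**}$, and then define
\[
\eta\colon A\to (A\otimes_{p}A)^{**},\qquad \eta(a)=\psi\bigl(\rho(\iota(a))\bigr).
\]
This $\eta$ is bounded and linear by construction, and my goal is to verify the three conditions in the definition of left $\phi$-biflatness.

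For the module-like identities, the key observation is that $\iota$ is an algebra homomorphism and that the bimodule action of $A$ on $A^{**}\otimes_{p}A^{**}$ agrees (via $\iota$) with the one used in \cite[Lemma 1.7]{gha}, so for $a,b\in A$,
\[
\rho(ab)=\iota(a)\cdot\rho(\iota(b))=\tilde{\phi}(\iota(b))\rho(\iota(a))=\phi(b)\rho(\iota(a)),
\]
since $\tilde{\phi}\circ\iota=\phi$. Applying $\psi$ and using properties (i)--(ii) of the Ghahramani map (namely $a\cdot\psi(m)=\psi(a\cdot m)$ and the $\mathbb{C}$-linearity absorbing scalars), I obtain
\[
\eta(ab)=\psi\bigl(\iota(a)\cdot\rho(\iota(b))\bigr)=a\cdot\psi(\rho(\iota(b)))=a\cdot\eta(b)
\]
and likewise $\eta(ab)=\phi(b)\eta(a)$. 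For the augmentation condition, I invoke property (iii), $\pi_{A}^{**}\circ\psi=\pi_{A^{**}}$, to compute
\[
\tilde{\phi}\circ\pi_{A}^{**}\circ\eta(a)=\tilde{\phi}\circ\pi_{A^{**}}\circ\rho(\iota(a))=\tilde{\phi}(\iota(a))=\phi(a).
\]

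The only point requiring genuine care is making sure the $A$-bimodule action appearing in the left $\tilde{\phi}$-biprojectivity identities on $A^{**}$ truly restricts, under $\iota$, to the $A$-bimodule action used to define the Ghahramani map $\psi$; this is standard because both come from the multiplication of $A^{**}$ extending that of $A$, so no real obstacle is expected. I anticipate this proof to be short and essentially mechanical once the three ingredients (the biprojectivity map on $A^{**}$, the embedding $\iota$, and the map $\psi$) are lined up.
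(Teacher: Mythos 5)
Your proposal is correct and follows essentially the same route as the paper: restrict the biprojectivity map $\rho$ on $A^{**}$ to (the image of) $A$, compose with the map $\psi$ of \cite[Lemma 1.7]{gha}, and verify the three identities using properties (i)--(iii) of $\psi$ together with $\tilde{\phi}\circ\iota=\phi$. No substantive difference from the paper's argument.
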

\begin{proof}
	Let $A^{**}$ be  $\tilde{\phi}$-biprojective. Then there exists a bounded linear map $\rho:A^{**}\rightarrow A^{**}\otimes_{p}A^{**}$ such that $\rho(ab)=a\cdot \rho(b)=\tilde{\phi}(b)\rho(a)$ and $\tilde{\phi}\circ \pi_{A^{**}}\circ\rho(a)=\tilde{\phi}(a)$, for each $a,b\in A^{**}$. There exists a
	bounded linear map $\psi:A^{**}\otimes_{p} A^{**}\rightarrow
	(A\otimes_{p} A)^{**}$ such that for $a,b\in A$ and $m\in
	A^{**}\otimes_{p} A^{**}$, the following holds;
	\begin{enumerate}
		\item [(i)] $\psi(a\otimes b)=a\otimes b $,
		\item [(ii)] $\psi(m)\cdot a=\psi(m\cdot a)$,\qquad
		$a\cdot\psi(m)=\psi(a\cdot m),$
		\item [(iii)] $\pi_{A}^{**}(\psi(m))=\pi_{A^{**}}(m),$
	\end{enumerate}
	see \cite[Lemma 1.7]{gha}. Set $\eta=\psi\circ\rho|_{A}:A\rightarrow (A\otimes_{p}A)^{**}.$ Clearly $\eta $ is a bounded linear map which satisfies 
	$$\eta(ab)=\psi\circ\rho|_{A}(ab)=\psi(a\cdot\rho|_{A}(b))=a\cdot \psi\circ\rho|_{A}(b)$$ 
	and 
	$$\phi(b)\eta(a)=\phi(b)\psi\circ\rho|_{A}(a)=\psi(\phi(b)\rho|_{A})=\psi\circ\rho|_{A}(ab)=\eta(ab).$$
	Also we have $$\tilde{\phi}\circ\pi^{**}_{A}\circ\eta(a)=\tilde{\phi}\circ\pi^{**}_{A}\circ\psi\circ\rho|_{A}(a)=\tilde{\phi}\circ\pi_{A^{**}}\circ\rho|_{A}(a)=\phi(a),$$
	for each $a\in A.$ It follows that $A$ is left $\phi$-biflat.
\end{proof}
\begin{Theorem}
	Let $A$ and $B$ be banach algebra with $\phi\in\Delta(A)$ and $\psi\in\Delta(B).$  If $A$ left $\phi$-biprojective and $B$ is $\psi$-biprojective, then $A\otimes_{p}B$ is left $\phi\otimes\psi$-biprojective.
\end{Theorem}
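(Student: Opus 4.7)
The plan is to build the required map $\rho : A\otimes_{p}B \to (A\otimes_{p}B)\otimes_{p}(A\otimes_{p}B)$ by tensoring the two given maps and then applying a natural flip. Concretely, let $\rho_{A}:A\to A\otimes_{p}A$ and $\rho_{B}:B\to B\otimes_{p}B$ be the bounded linear maps witnessing left $\phi$-biprojectivity of $A$ and left $\psi$-biprojectivity of $B$. First, by the universal property of the projective tensor product, the bounded bilinear map $(a,b)\mapsto \rho_{A}(a)\otimes \rho_{B}(b)$ lifts to a bounded linear map $\rho_{A}\otimes\rho_{B}:A\otimes_{p}B\to (A\otimes_{p}A)\otimes_{p}(B\otimes_{p}B)$. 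Second, the flip $T((a_{1}\otimes a_{2})\otimes(b_{1}\otimes b_{2}))=(a_{1}\otimes b_{1})\otimes(a_{2}\otimes b_{2})$ extends to a bounded linear (in fact isometric) map $T:(A\otimes_{p}A)\otimes_{p}(B\otimes_{p}B)\to (A\otimes_{p}B)\otimes_{p}(A\otimes_{p}B)$. I would then set $\rho = T\circ(\rho_{A}\otimes\rho_{B})$.

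Next I would check the three defining identities on elementary tensors and extend by linearity and continuity. The critical observation is that $T$ intertwines the canonical left actions: for $a\in A$, $b\in B$ and $x\in A\otimes_{p}A$, $y\in B\otimes_{p}B$, the equality $T((a\cdot x)\otimes (b\cdot y))=(a\otimes b)\cdot T(x\otimes y)$ is immediate on elementary tensors. Using this together with $\rho_{A}(ac)=a\cdot \rho_{A}(c)=\phi(c)\rho_{A}(a)$ and $\rho_{B}(bd)=b\cdot \rho_{B}(d)=\psi(d)\rho_{B}(b)$, one computes
\begin{align*}
\rho((a\otimes b)(c\otimes d)) &= T\bigl(\rho_{A}(ac)\otimes \rho_{B}(bd)\bigr) \\
 &= T\bigl((a\cdot \rho_{A}(c))\otimes (b\cdot \rho_{B}(d))\bigr) = (a\otimes b)\cdot \rho(c\otimes d),
\end{align*}
and on the other hand $T(\phi(c)\rho_{A}(a)\otimes \psi(d)\rho_{B}(b))=(\phi\otimes\psi)(c\otimes d)\,\rho(a\otimes b)$, giving both bimodule-type identities required by the definition.

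Finally, for the character condition I would observe that $\pi_{A\otimes_{p}B}\circ T$ agrees with $\pi_{A}\otimes \pi_{B}$ on elementary tensors, since both send $(a_{1}\otimes a_{2})\otimes(b_{1}\otimes b_{2})$ to $a_{1}a_{2}\otimes b_{1}b_{2}$. Hence
$$(\phi\otimes\psi)\circ \pi_{A\otimes_{p}B}\circ \rho(a\otimes b)=\phi\bigl(\pi_{A}(\rho_{A}(a))\bigr)\,\psi\bigl(\pi_{B}(\rho_{B}(b))\bigr)=\phi(a)\psi(b)=(\phi\otimes\psi)(a\otimes b),$$
and the result follows. The only real obstacle is the bookkeeping around the flip $T$: one must check that it is well defined and bounded (standard, via the universal property) and that it interacts with the left $A\otimes_{p}B$-action and with the product maps as described. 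Once this is set up, the three required identities for $\rho$ drop out by direct substitution.
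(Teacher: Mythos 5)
Your proposal is correct and follows essentially the same route as the paper: the paper also defines $\rho=\theta\circ(\rho_A\otimes\rho_B)$ with $\theta$ the same flip isomorphism $(A\otimes_p A)\otimes_p(B\otimes_p B)\to(A\otimes_p B)\otimes_p(A\otimes_p B)$, verifies the two module-type identities on elementary tensors using the intertwining property of the flip, and uses $\pi_{A\otimes_p B}\circ\theta=\pi_A\otimes\pi_B$ for the character condition. Your extra remarks on extending from elementary tensors by bilinearity and continuity only make the argument more careful than the paper's.
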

\begin{proof}
	Since $A$ left $\phi$-biprojective and $B$ is $\psi$-biprojective, there exist  bounded linear map $\rho_{A}:A\rightarrow A\otimes_{p}A$ and $\rho_{B}:B\rightarrow B\otimes_{p}B$ such that $$\rho_{A}(a_{1}a_{2})=a_{1}\cdot \rho_{A}(a_{2})=\phi(a_{2})\rho_{A}(a_{1}),\quad \phi\circ\pi_{A}\circ \rho_{A}=\phi,\quad (a_{1},a_{2}\in A)$$
	and $$\rho_{B}(b_{1}b_{2})=b_{1}\cdot \rho_{B}(b_{2})=\phi(b_{2})\rho_{B}(b_{1}),\quad \psi\circ\pi_{B}\circ \rho_{B}=\psi,\quad (b_{1},b_{2}\in B).$$ Let $\theta$ be an isometrical isomorphism from $(A\otimes_{p}A)\otimes_{p}(B\otimes_{p} B)$ into $(A\otimes_{p}B)\otimes_{p}(A\otimes_{p} B)$ which is given by $\theta (a_{1}\otimes a_{2}\otimes b_{1}\otimes b_{2})=a_{1}\otimes b_{1}\otimes a_{2}\otimes b_{2}$ for each $a_{1},a_2\in A$ and $b_1,b_2\in B.$
	Define $\rho=\theta\circ(\rho_{A}\otimes \rho_{B})$. So 
	 \begin{equation*}
	\begin{split}
	\rho((a_{1}\otimes b_{1})(a_{2}\otimes b_{2}))=\theta\circ(\rho_{A}\otimes \rho_{B})((a_{1}\otimes b_{1})(a_{2}\otimes b_{2}))&=\theta(\rho_{A}(a_{1}a_{2})\otimes \rho_{B}(b_{1}b_{2})\\
	&=\theta(a_1\cdot \rho_{A}(a_{2})\otimes b_1\cdot \rho_{B}(b_{2}))\\
	&=\theta((a_{1}\otimes b_{1})\cdot(\rho_{A}(a_{2})\otimes \rho_{B}(b_{2}))\\
	&=(a_{1}\otimes b_{1})\cdot \theta\circ(\rho_{A}\otimes \rho_{B})(a_2\otimes b_2),
	\end{split}
	\end{equation*}
	 for each $a_{1},a_2\in A$ and $b_1,b_2\in B.$ It follows that $\rho(xy)=x\cdot \rho(y)$ for each $x,y\in A\otimes_{p}B$. Also we have 
	 \begin{equation*}
	 \begin{split}
	 \phi\otimes\psi(a_{1}\otimes b_{1})\rho(a_{2}\otimes b_{2})=\phi(a_{1})\psi(b_{1})\theta\circ(\rho_{A}(a_2)\otimes \rho_B(b_2))&=\theta\circ(\phi(a_1)\rho_{A}(a_2)\otimes\psi(b_1) \rho_B(b_2))\\
	 &=\theta\circ(\rho_{A}(a_2a_1)\otimes \rho_B(b_2b_{1}))\\
	 &=\rho((a_2\otimes b_{2})(a_1\otimes b_1),
	 \end{split}
	 \end{equation*}
	 for each $a_{1},a_2\in A$ and $b_1,b_2\in B.$ So for each $x, y\in A\otimes_{p} B$ we have $\phi\otimes \psi(x)\rho(y)=\rho(yx)$. Note that $$\pi_{A\otimes_{p}B}\circ\theta(a_{1}\otimes a_{2}\otimes b_{1}\otimes b_{2})=\pi_{A\otimes_{p}B}(a_{1}\otimes b_{1}\otimes a_{2}\otimes b_{2})=\pi_{A}(a_{1}\otimes a_{2})\pi_{B}(b_1\otimes b_{2}),$$
	 it implies that $\pi_{A\otimes_{p}B}\circ\theta=\pi_{A}\otimes \pi_{B}$. Then 
	 \begin{equation*}
	 \begin{split}
(	\phi\otimes \psi)\circ \pi_{A\otimes_{p}B}\circ \rho(a\otimes b)&=(	\phi\otimes \psi)\circ \pi_{A\otimes_{p}B}\circ\theta \circ(\rho_{A}\otimes \rho_{B})(a\otimes b)\\
&=(\phi\otimes\psi)\circ(\pi_{A}\otimes \pi_{B})\circ(\rho_{A}\otimes \rho_{B})(a\otimes b)\\
&=\phi\circ\pi_{A}\circ\rho_{A}(a) \psi\circ\pi_{B}\circ\rho_{B}(b)\\
&=\phi(a)\psi(b)=\phi\otimes\psi(a\otimes b),
	 \end{split}
	 \end{equation*}
	 for each $a\in A$ and $b\in B.$ Therefore $(	\phi\otimes \psi)\circ \pi_{A\otimes_{p}B}\circ \rho(x)=	\phi\otimes \psi(x)$ for every $x\in A\otimes_p B.$ It follows that $A\otimes_p B$ is left $\phi\otimes\psi$-biprojective.
\end{proof}
Let $\hat{G}$ be the dual group of $G$
which consists of all non-zero continuous homomorphism
$\rho:G\rightarrow \mathbb{T}$.  It is well-known
that every character (multiplicative linear functional) $\phi\in\Delta(L^{1}(G))$ has the form
$\phi_{\rho}(f)=\int_{G}\overline{\rho(x)}f(x)dx$, where $dx$ is the
normalized Haar measure and $\rho\in\hat{G}$, for more details see
\cite[Theorem 23.7]{hew}.
Note that, since $L^1(G) $ is a closed ideal of the mearsure algebra $M(G)$, each character on $L^1(G)$ can be extended to $M(G).$ Note that for  a locally compact group $G$, we denote $A(G)$ for the Fourier algebra. The character space $\Delta(A(G))$ consists of all point evaluations $\phi_{x}$ for each $x\in G,$ where $$\phi_{x}(f)=f(x), \quad (f\in A(G)),$$
see\cite[Example 2.6]{kan}.
\begin{Theorem}
	Let $G$ be a locally compact group. Then $M(G)\otimes_{p} A(G)$ is left $\phi\otimes \psi$-biprojective if and only if $G$ is finite, where $\phi\in\Delta(L^{1}(G))$ and $\psi\in\Delta(A(G))$.
\end{Theorem}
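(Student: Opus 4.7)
The sufficiency is immediate from the preceding material: if $G$ is finite, then both $M(G)=\ell^1(G)$ and $A(G)$ are finite-dimensional unital commutative semisimple Banach algebras, hence each is left $\phi$-contractible (respectively left $\psi$-contractible) and in particular left $\phi$-biprojective and left $\psi$-biprojective via the rank-one maps $a\mapsto \phi(a)m\otimes m$. The tensor-product theorem proved earlier in the excerpt then gives that $M(G)\otimes_p A(G)$ is left $\phi\otimes\psi$-biprojective.

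For necessity, I would run a two-sided retract argument. Let $\rho$ witness left $\phi\otimes\psi$-biprojectivity of $M(G)\otimes_p A(G)$. Because $\delta_e$ is a unit of $M(G)$ with $\phi(\delta_e)=1$, the map $s(f)=\delta_e\otimes f$ is a bounded algebra homomorphism $A(G)\hookrightarrow M(G)\otimes_p A(G)$ with bounded algebra left inverse $p=\phi\otimes\mathrm{id}_{A(G)}$; moreover $\psi\circ p=\phi\otimes\psi$ and $(\phi\otimes\psi)\circ s=\psi$. A routine diagram chase shows that $\rho_{A(G)}:=(p\otimes p)\circ\rho\circ s$ makes $A(G)$ left $\psi$-biprojective.

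The heart of the proof is extracting discreteness of $G$ from this new structure. Write $\psi=\phi_{x_0}$ and fix $g_0\in A(G)$ with $g_0(x_0)=1$. The identities $\rho_{A(G)}(f)=\rho_{A(G)}(fg_0)=f\cdot\rho_{A(G)}(g_0)$ give $\rho_{A(G)}(f)=f\cdot M$ for $M:=\rho_{A(G)}(g_0)$, and comparing $\rho_{A(G)}(fg)=fg\cdot M$ with $\rho_{A(G)}(fg)=\psi(g)f\cdot M$ yields $f\cdot(g\cdot M-\psi(g)M)=0$ for all $f,g\in A(G)$. Viewing $M$ as a continuous function $\tilde M$ on $G\times G$ via the canonical contraction $A(G)\otimes_p A(G)\to C(G\times G)$, and applying point-evaluations $\phi_y\otimes\phi_z$, one selects $f$ with $f(y)\neq 0$ and, for $y\neq x_0$, $g\in A(G)$ with $g(y)\neq g(x_0)$ (available by the point-separation of $A(G)$) to force $\tilde M(y,z)=0$ whenever $y\neq x_0$. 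Since $\psi\circ\pi_{A(G)}\circ\rho_{A(G)}(g_0)=\psi(g_0)=1$ gives $\tilde M(x_0,x_0)=1$, the function $\tilde M$ is not identically zero, so the singleton $\{x_0\}$ cannot sit inside a dense complement and must be open, whence $G$ is discrete.

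Once $G$ is discrete, $\chi_{\{x_0\}}\in A(G)$ is an idempotent with $\psi(\chi_{\{x_0\}})=1$, which unlocks the symmetric retract: $s'(\mu)=\mu\otimes\chi_{\{x_0\}}$ is an algebra embedding $M(G)\hookrightarrow M(G)\otimes_p A(G)$ with bounded algebra left inverse $p'=\mathrm{id}_{M(G)}\otimes\psi$, and $\rho_{M(G)}:=(p'\otimes p')\circ\rho\circ s'$ witnesses left $\phi$-biprojectivity of $M(G)=\ell^1(G)$. Now $\ell^1(G)$ is unital: setting $m_0:=\rho_{M(G)}(\delta_e)$, the identities $a\cdot m_0=\rho_{M(G)}(a)=\phi(a)m_0$ hold for every $a\in\ell^1(G)$, so $m:=\pi(m_0)\in\ell^1(G)$ satisfies $\mu*m=\phi(\mu)m$ for all $\mu$ and $\phi(m)=1$. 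Taking $\mu=\delta_g$ and using $|\phi(\delta_g)|=1$ (a character sends the group invertibles in $\ell^1(G)$ to the unit circle) forces $|m(g)|=|m(e)|$ for every $g\in G$, whence $\|m\|_1=|m(e)|\cdot|G|$; this is finite with $m\neq 0$ only when $|G|<\infty$. The main obstacle in the whole argument is the discreteness step, where the regularity and point-separation of the Fourier algebra must be combined with the projective tensor norm to collapse the biprojectivity datum $M$ onto a single point of $G$.
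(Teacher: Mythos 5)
Your proof is correct, but it follows a genuinely different route from the paper's. The paper's necessity argument never touches the biprojectivity datum directly: it observes that $x_{0}=\delta_{e}\otimes a_{0}$ is a central element with $\phi\otimes\psi(x_{0})=1$, invokes a cited lemma to upgrade left $\phi\otimes\psi$-biprojectivity of $M(G)\otimes_{p}A(G)$ to left $\phi\otimes\psi$-contractibility, and then quotes three results of Nasr-Isfahani and Soltani Renani (that contractibility passes to the tensor factors, that left $\phi$-contractibility of $M(G)$ forces $G$ compact, and that left $\psi$-contractibility of $A(G)$ forces $G$ discrete). You instead work at the level of biprojectivity itself: the retract $s(f)=\delta_{e}\otimes f$, $p=\phi\otimes\mathrm{id}$ transports the structure map to $A(G)$, your support argument with $\tilde M$ on $G\times G$ re-derives from scratch the fact that left $\psi$-biprojectivity of $A(G)$ isolates the point $x_{0}$ (essentially reproving the cited Proposition 6.6), and only then does discreteness supply the idempotent $\chi_{\{x_{0}\}}$ needed for the second retract onto $\ell^{1}(G)$, whose left $\phi$-contractibility you also establish and exploit by hand via $|m(g^{-1}x)|=|m(x)|$. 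The ordering of your two retracts is essential and correctly chosen, since the $M(G)$-retract needs the idempotent that only exists after discreteness is known. What your approach buys is self-containedness and a proof that works directly from the definition without the contractibility upgrade; what the paper's approach buys is brevity and reuse of the established character-contractibility machinery. One cosmetic remark: in the sufficiency direction the rank-one map $a\mapsto\phi(a)m\otimes m$ requires an element $m$ with $am=\phi(a)m$ and $\phi(m)=1$, which for finite $G$ is exactly left $\phi$-contractibility; this is fine but worth stating, since it is the same fact the paper extracts from \cite[Lemma 2.1]{sahami left biprojective}.
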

\begin{proof}
Let $M(G)\otimes_{p} A(G)$ be left $\phi\otimes \psi$-biprojective. Let $e$ be the unit of $M(G)$ and $a_{0}$ be the element of $A(G)$ such that $\psi(a_{0})=1.$	Put $x_{0}=e\otimes a_{0}$. Clearly
$xx_{0}=x_{0}x$ and $\phi\otimes \psi(x_{0})=1,$ for every $x\in M(G)\otimes_{p} A(G).$ Now applying \cite[Lemma 2.2]{sahami left biprojective}   $M(G)\otimes_{p} A(G)$ is  left $\phi\otimes \psi$-contractible. Now using \cite[Theorem 3.14]{Nas} $M(G)$ is left $\phi$-contractible, so by \cite[Theorem 6.2]{Nas} $G$ is compact. Also by \cite[Theorem 3.14]{Nas} $A(G)$ is left $\psi$-contractible.  Thus by \cite[Proposition 6.6]{Nas} $G$ is discrete. Therefore $G$ is finite.

Converse is clear. 
\end{proof}
\begin{Theorem}
	Let $G$ be a locally compact group. Then $M(G)\otimes_{p} L^1(G)$ is left $\phi\otimes \psi$-biprojective if and only if $G$ is compact, where $\phi,\psi\in\Delta(L^{1}(G))$.
\end{Theorem}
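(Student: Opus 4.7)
My plan is to mirror the proof of the preceding theorem about $M(G)\otimes_{p}A(G)$; the only structural change is that the second factor $L^{1}(G)$ no longer forces discreteness, so the conclusion weakens from ``$G$ finite'' to ``$G$ compact.''

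For the converse, assume $G$ is compact. By \cite[Theorem 6.2]{Nas} $M(G)$ is left $\phi$-contractible, and the analogous result for $L^{1}(G)$ (or Helemskii's classical biprojectivity theorem) gives that $L^{1}(G)$ is left $\psi$-contractible. Both properties immediately imply left biprojectivity (set $\rho(a)=\phi(a)\,m\otimes m$ where $m$ is the contractibility witness). The tensor-product theorem proved just above then yields that $M(G)\otimes_{p}L^{1}(G)$ is left $\phi\otimes\psi$-biprojective.

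For the forward direction I would follow the template of the $M(G)\otimes_{p}A(G)$ theorem. Let $e$ be the identity of $M(G)$ and pick $f_{0}\in L^{1}(G)$ with $\psi(f_{0})=1$; set $x_{0}=e\otimes f_{0}$, so that $\phi\otimes\psi(x_{0})=1$. Once one verifies that $x_{0}$ commutes with every element of $M(G)\otimes_{p}L^{1}(G)$, \cite[Lemma 2.2]{sahami left biprojective} promotes left $\phi\otimes\psi$-biprojectivity of the tensor product to left $\phi\otimes\psi$-contractibility. Then \cite[Theorem 3.14]{Nas} transfers contractibility to the factor $M(G)$, and finally \cite[Theorem 6.2]{Nas} yields $G$ compact.

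The main obstacle is the centrality of $x_{0}$: since $e$ is central in $M(G)$, the condition $x x_{0} = x_{0} x$ for all $x\in M(G)\otimes_{p}L^{1}(G)$ reduces to $f_{0}\in Z(L^{1}(G))$ with $\psi(f_{0})=1$. For abelian $G$ this is automatic from commutativity of $L^{1}(G)$; in general one would select $f_{0}$ from a central bounded approximate identity of $L^{1}(G)$, rescaled so that $\psi(f_{0})=1$. Unlike the $M(G)\otimes_{p}A(G)$ theorem, there is no second step forcing $G$ to also be discrete, which is precisely why the characterization stops at compactness rather than finiteness.
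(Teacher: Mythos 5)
Your converse is essentially the paper's: compactness gives left $\phi$-contractibility of $M(G)$ and left $\psi$-contractibility of $L^1(G)$, hence left $\phi\otimes\psi$-contractibility of the tensor product via \cite[Theorem 3.14]{Nas}, and then left $\phi\otimes\psi$-biprojectivity. That half is fine.

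The forward direction, however, has a genuine gap exactly at the point you flag as ``the main obstacle.'' You want a central element $x_{0}=e\otimes f_{0}$ with $\phi\otimes\psi(x_{0})=1$ in order to invoke the central-element lemma \cite[Lemma 2.2]{sahami left biprojective}, and this forces $f_{0}\in Z(L^{1}(G))$ with $\psi(f_{0})=1$. For a general locally compact group such an $f_{0}$ need not exist: $L^{1}(G)$ admits a central (bounded) approximate identity only when $G$ is a SIN group (this is precisely the Kotzmann--Rindler result the paper invokes in the Segal-algebra proposition, where the SIN hypothesis is added for exactly this reason), and for non-SIN groups the centre of $L^{1}(G)$ can even fail to meet $\ker\psi^{c}$ at all. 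So your proposed repair does not cover the general case, and the theorem as stated has no SIN hypothesis. The paper avoids this entirely: since $M(G)$ is unital and $L^{1}(G)$ has a bounded approximate identity $(e_{\alpha})$, the net $e\otimes e_{\alpha}$ is a bounded approximate identity for $M(G)\otimes_{p}L^{1}(G)$, and the bounded-approximate-identity version of the promotion lemma (the one used in the $A(G)\otimes_{p}L^{1}(G)$ proposition, \cite[Proposition 2.4]{sahami left biprojective}) upgrades left $\phi\otimes\psi$-biprojectivity to left $\phi\otimes\psi$-contractibility with no centrality needed. From there \cite[Theorem 3.14]{Nas} passes contractibility to a factor and \cite[Theorem 6.2]{Nas} gives compactness, as you describe. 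Replace the central-element step by the bounded-approximate-identity step and your argument closes.
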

\begin{proof}
	Suppose that $M(G)\otimes_{p} L^1(G)$ is left $\phi\otimes \psi$-biprojective. Let $e$ be the unit of $M(G)$ and $e_{\alpha}$ be a bounded approximate identity of $L^1(G).$ Clearly $e\otimes e_{\alpha}$ is a bounded approximate identity. Thus by \cite[Lemma 2.2]{sahami left biprojective} $M(G)\otimes_{p} L^1(G)$ is left $\phi\otimes \psi$-contractible. So \cite[Theorem 3.14]{Nas} $L^{1}(G)$ is left $\psi$-contractible. Then by \cite[Theorem 6.2]{Nas} $G$ is compact.
	
	For converse, suppose that  $G$ is compact. Then by \cite[Theorem 3.14]{Nas} M(G) is left $\phi$-contractible and by \cite[Theorem 3.14]{Nas} $L^{1}(G)$ is left $\psi$-contractible. Applying \cite[Theorem 3.14]{Nas} $M(G)\otimes_{p} L^1(G)$ is left $\phi\otimes \psi$-contractible. So by \cite[Lemma 2.1]{sahami left biprojective} $M(G)\otimes_{p} L^1(G)$ is left $\phi\otimes \psi$-biprojective.
\end{proof}
A Banach algebra $A$ is called left character biprojective (left character biflat) if $A$ is left $\phi$-biprojective (if $A$ is left $\phi$-biflat) for each $\phi\in\Delta(A),$ respectively.
\begin{Theorem}
	Let $G$ be a locally compact group. Then
$M(G)\otimes_{p} L^1(G)$ is  left character biprojective if and only if $G$ is finite.
\end{Theorem}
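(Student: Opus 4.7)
The plan is to split the biconditional and, for the forward direction, reduce to the preceding theorem and then extract extra information from the quantifier over all characters of $M(G)$, not merely those inherited from $L^1(G)$.

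For sufficiency, when $G$ is finite we have $M(G)=L^1(G)=\mathbb{C}[G]$, a finite-dimensional semisimple unital Banach algebra. Such an algebra is biprojective and hence left $\phi$-biprojective for every $\phi\in\Delta(M(G))=\Delta(L^1(G))$. The tensor-product theorem established earlier in this section then yields that $M(G)\otimes_{p} L^1(G)$ is left $\phi\otimes\psi$-biprojective for every pair of characters, and since $\Delta(M(G)\otimes_{p} L^1(G))=\Delta(M(G))\times\Delta(L^1(G))$, the algebra is left character biprojective.

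For necessity, suppose $M(G)\otimes_{p} L^1(G)$ is left character biprojective. Specialising to any $\phi\in\Delta(L^1(G))$ (extended to $M(G)$ in the paper's convention) and any $\psi\in\Delta(L^1(G))$, the preceding theorem immediately gives $G$ compact. To sharpen this to $G$ finite, I would observe that $\delta_{e}\otimes e_{\alpha}$ is a bounded approximate identity in $M(G)\otimes_{p} L^1(G)$; so by \cite[Lemma 2.2]{sahami left biprojective}, left character biprojectivity upgrades to left character contractibility, and then \cite[Theorem 3.14]{Nas} forces $M(G)$ itself to be left $\phi$-contractible for \emph{every} $\phi\in\Delta(M(G))$.

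The main obstacle is the final step: promoting ``$M(G)$ is left character contractible'' to ``$G$ is discrete'' (whence, combined with compactness, $G$ is finite). The delicate point is handling characters of $M(G)$ that do not extend any character of $L^1(G)$: for non-discrete compact $G$, $L^1(G)$ is a proper closed ideal of $M(G)$, and by Taylor/Wiener--Pitt type phenomena $\Delta(M(G))$ strictly exceeds the image of $\Delta(L^1(G))$. I would look to exhibit, or to cite from \cite{Nas} or an analogous source, a character $\phi\in\Delta(M(G))$ not extending any character of $L^1(G)$ at which $M(G)$ fails to be left $\phi$-contractible; such a blocking character contradicts the hypothesis whenever $G$ is non-discrete, forcing $G$ finite. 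Producing this blocking character explicitly is the technical heart of the argument and is where I would lean on the existing literature rather than attempt a direct construction.
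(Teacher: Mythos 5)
Your route coincides with the paper's up to the decisive point: both arguments use the bounded approximate identity $\delta_e\otimes e_\alpha$ together with \cite[Lemma 2.2]{sahami left biprojective} and \cite[Theorem 3.14]{Nas} to conclude that $M(G)$ is left $\phi$-contractible for \emph{every} $\phi\in\Delta(M(G))$. The one genuine gap is the step you yourself flag as open: passing from ``$M(G)$ is left character contractible'' to ``$G$ is finite.'' The paper closes this with a single citation, \cite[Corollary 6.2]{Nas}, which states precisely that for the unital algebra $M(G)$ left character contractibility forces $G$ to be finite; no construction of a ``blocking character'' in $\Delta(M(G))\setminus\widehat{G}$ is needed, and hunting for one via Wiener--Pitt phenomena is the wrong (and much harder) tack. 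If you prefer not to quote that corollary as a black box, there is a softer finish that the paper itself uses in the very next theorem: left $\phi$-contractibility implies left $\phi$-amenability for every $\phi\in\Delta(M(G))$, and since $M(G)$ is unital this gives character amenability of $M(G)$; Monfared's theorem \cite{Sang} then yields that $G$ is discrete (and amenable), which combined with the compactness you already have gives finiteness. Either way the missing step is a known characterization to be cited, not a new construction.

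Your sufficiency argument (finite $G$, so $M(G)=L^1(G)=\ell^1(G)$ is biprojective, then the tensor-product theorem of this section) is correct and in fact more explicit than the paper's ``Converse is clear.''
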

\begin{proof}
Let $M(G)\otimes_{p} L^1(G)$ be left character biprojective. So $M(G)\otimes_{p} L^1(G)$ is left $\phi\otimes\psi$-biprojective for each $\phi\in\Delta(M(G))$ and $\psi\in\Delta(L^{1}(G))$. So by similar arguments as in previous Proposition,  $M(G)$ left $\phi$-contractible for each $\phi\in\Delta(M(G))$. ُ
Since $M(G)$ is unital, by \cite[Corollary 6.2]{Nas} $G$ is finite.

Converse is clear.
\end{proof}
\begin{Theorem}
	Let $G$ be a locally compact group. Then
	$M(G)\otimes_{p} L^1(G)$ is  left character biflat  if and only if $G$ is a discrete amenable group.
\end{Theorem}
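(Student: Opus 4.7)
The plan is to mimic the structure of the two preceding theorems, substituting character amenability for character contractibility. We treat the two implications separately.

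For the forward direction, assume that $M(G)\otimes_{p} L^{1}(G)$ is left character biflat. Let $e$ denote the unit of $M(G)$ and let $(e_{\alpha})$ be a bounded approximate identity of $L^{1}(G)$; then $(e\otimes e_{\alpha})$ is a bounded approximate identity of $M(G)\otimes_{p}L^{1}(G)$. For each $\phi\in\Delta(M(G))$ and $\psi\in\Delta(L^{1}(G))$, left $\phi\otimes\psi$-biflatness combined with this bounded approximate identity upgrades to left $\phi\otimes\psi$-amenability by \cite[Lemma 2.1]{sahami left biflat}. I then apply \cite[Theorem 3.14]{Nas} exactly as in the two preceding theorems (there invoked for contractibility, here for amenability) to factor the character amenability of the tensor product onto its components: $M(G)$ is left $\phi$-amenable for every $\phi\in\Delta(M(G))$, and $L^{1}(G)$ is left $\psi$-amenable for every $\psi\in\Delta(L^{1}(G))$. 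Left character amenability of $L^{1}(G)$ is equivalent to amenability of $G$, while left character amenability of the unital algebra $M(G)$ is the character-amenability analog of \cite[Corollary 6.2]{Nas} and forces $G$ to be discrete; so $G$ is discrete and amenable.

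For the converse, when $G$ is discrete and amenable we have $M(G)=L^{1}(G)=\ell^{1}(G)$, which is an amenable Banach algebra, so the projective tensor product $\ell^{1}(G)\otimes_{p}\ell^{1}(G)$ is amenable. In particular it is left $\phi\otimes\psi$-amenable for every pair of characters. Combining this with the bounded approximate identity $(e\otimes e_{\alpha})$ yields left $\phi\otimes\psi$-biflatness via the construction from the earlier Proposition: pick $m\in (M(G)\otimes_{p}L^{1}(G))^{**}$ with $xm=(\phi\otimes\psi)(x)m$ and $\widetilde{\phi\otimes\psi}(m)=1$, set $\rho(x)=(\phi\otimes\psi)(x)\,m\otimes m$, and compose with the canonical map from the tensor product of the double duals into the double dual of the tensor product provided by \cite[Lemma 1.7]{gha}. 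This supplies a left $\phi\otimes\psi$-biflatness morphism for every character, so $M(G)\otimes_{p}L^{1}(G)$ is left character biflat.

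The principal obstacle is the discreteness step: one needs the character-amenability analog of \cite[Corollary 6.2]{Nas}, asserting that left character amenability of the unital algebra $M(G)$ forces $G$ to be discrete. The other ingredients—manufacturing a bounded approximate identity on the tensor product out of a unit against a bounded approximate identity, the BAI-plus-biflatness-to-amenability step, the factorization of character amenability across a projective tensor product, and the character-amenability-to-biflatness construction—are routine and already appear explicitly in the paper.
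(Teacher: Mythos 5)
Your overall architecture matches the paper's: bounded approximate identity on the tensor product, upgrade left $\phi\otimes\psi$-biflatness to left $\phi\otimes\psi$-amenability via \cite[Lemma 2.1]{sahami left biflat}, push the amenability down to the factors, and identify the group. But the step you yourself flag as ``the principal obstacle'' is a genuine gap as written: you assert that left character amenability of the unital algebra $M(G)$ forces $G$ to be discrete, calling it a ``character-amenability analog of \cite[Corollary 6.2]{Nas},'' without proving it or citing anything that establishes it. This is not something one can wave at --- it is precisely the main theorem of Monfared \cite{Sang}, which says that $M(G)$ is character amenable if and only if $G$ is discrete and amenable. That is how the paper closes the argument, and note that once you invoke it you get discreteness \emph{and} amenability in one stroke, so your separate extraction of amenability from the character amenability of $L^1(G)$ becomes redundant. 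A smaller issue in the same direction: you cite \cite[Theorem 3.14]{Nas} to factor left character amenability of $A\otimes_p B$ onto the factors, but that theorem concerns left $\phi$-contractibility; the amenability version you need is \cite[Theorem 3.3]{kan}, which is what the paper uses.

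Your converse is correct but routed differently from the paper. You pass from amenability of $\ell^1(G)\otimes_p\ell^1(G)$ to left $\phi\otimes\psi$-amenability for each character and then rebuild a left $\phi\otimes\psi$-biflatness map by the $\rho(x)=(\phi\otimes\psi)(x)\,m\otimes m$ construction composed with the map of \cite[Lemma 1.7]{gha}; this works, and it is the same device used in the paper's Proposition on $A^{**}$ approximately biflat. The paper takes a shorter path: $\ell^1(G)\otimes_p\ell^1(G)\cong\ell^1(G\times G)$ is amenable by Johnson's theorem, hence biflat by \cite[Exercise 4.3.15]{run}, and biflatness trivially implies left $\phi$-biflatness for every character. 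Your version avoids quoting the amenable-implies-biflat fact at the cost of redoing the construction by hand; either is acceptable. The only item you must actually repair is the forward-direction discreteness step, which should be replaced by an appeal to the main result of \cite{Sang}.
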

\begin{proof}
	Since $M(G)$ is unital and $L^{1}(G)$ has a bounded approximate identity, $M(G)\otimes_{p} L^1(G)$ has a bounded approximate identity. Thus by \cite[Lemma 2.1]{sahami left biflat} $M(G)\otimes_{p} L^1(G)$ is  left $\phi\otimes\psi$-amenable for each $\phi\in\Delta(M(G))$ and $\psi\in\Delta(L^1(G))$. So by \cite[Theorem 3.3]{kan} $M(G)$ is left $\phi$-amenable for each $\phi\in\Delta(M(G))$. Since $M(G)$ is unital, $M(G)$ character amenable. Therefore by  the main result of \cite{Sang}, $G$ is discrete and amenable.
	
	For converse, let $G$ be discrete and amenable. Then $M(G)\otimes_{p} L^1(G)=\ell^{1}(G)\otimes_p\ell^1(G)\cong \ell^1(G\times G)$. Applying Johnson's theorem (see \cite[Theorem 2.1.18]{run}) that $\ell^1(G\times G)$ is an amenable Banach algebra. So by \cite[Exercise 4.3.15]{run} $\ell^1(G\times G)$ biflat. Then  $\ell^1(G\times G)$ is left  character biflat.
\end{proof}
\begin{Proposition}
Let $G$ be an amenable group. Then
$A(G)\otimes_{p} L^1(G)$ is  left $\phi\otimes\psi$-biprojective  if and only if $G$ is finite.
\end{Proposition}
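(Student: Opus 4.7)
The plan is to adapt the argument already used in the paper for $M(G)\otimes_{p} L^{1}(G)$, with the crucial modification that the unit of $M(G)$ is no longer available and must be replaced by a bounded approximate identity for $A(G)$. This is precisely where the amenability hypothesis on $G$ enters: by Leptin's theorem, $A(G)$ admits a bounded approximate identity $(u_{\beta})$ if and only if $G$ is amenable.

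For the forward direction, I would first form a bounded approximate identity of the projective tensor product. Since $L^{1}(G)$ always has a bounded approximate identity $(e_{\alpha})$, the net $(u_{\beta}\otimes e_{\alpha})$, suitably indexed, is a bounded approximate identity for $A(G)\otimes_{p}L^{1}(G)$. Combining this with the assumed left $\phi\otimes\psi$-biprojectivity and applying \cite[Lemma 2.2]{sahami left biprojective} (exactly as in the $M(G)\otimes_{p}L^{1}(G)$ theorem above), I upgrade left $\phi\otimes\psi$-biprojectivity to left $\phi\otimes\psi$-contractibility of $A(G)\otimes_{p}L^{1}(G)$.

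Next, I would split this tensor-product contractibility into information about each factor via \cite[Theorem 3.14]{Nas}, obtaining that $A(G)$ is left $\phi$-contractible and $L^{1}(G)$ is left $\psi$-contractible. Two group-theoretic characterizations from \cite{Nas} then force the conclusion: by \cite[Proposition 6.6]{Nas}, left $\phi$-contractibility of $A(G)$ implies that $G$ is discrete, while by \cite[Theorem 6.2]{Nas}, left $\psi$-contractibility of $L^{1}(G)$ implies that $G$ is compact. A group that is both discrete and compact is finite, so $G$ is finite.

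For the converse, when $G$ is finite the algebras $A(G)$ and $L^{1}(G)$ are finite-dimensional unital commutative algebras and are easily seen to be left character contractible (for instance, via \cite[Theorem 3.14]{Nas}). Applying \cite[Theorem 3.14]{Nas} once more yields left $\phi\otimes\psi$-contractibility of $A(G)\otimes_{p}L^{1}(G)$, and finally \cite[Lemma 2.1]{sahami left biprojective} upgrades contractibility to left $\phi\otimes\psi$-biprojectivity. The only non-routine step is the use of Leptin's theorem in the first paragraph; without amenability of $G$ one loses the bounded approximate identity for $A(G)$, and hence the biprojectivity-to-contractibility upgrade that drives the whole argument would be unavailable.
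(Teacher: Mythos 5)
Your argument is correct and is essentially identical to the paper's own proof: Leptin's theorem supplies a bounded approximate identity for $A(G)$, the biprojectivity-to-contractibility upgrade is applied to $A(G)\otimes_{p}L^{1}(G)$, and \cite[Theorem 3.14]{Nas} together with the discreteness and compactness criteria from \cite{Nas} forces $G$ to be finite. The only differences are immaterial citation choices (the paper invokes \cite[Proposition 2.4]{sahami left biprojective} and \cite[Theorem 6.1]{Nas} where you use the companion Lemma 2.2 and Theorem 6.2) and the fact that you spell out the easy converse that the paper leaves to the reader.
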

\begin{proof}
	Since $G$ is amenable, Leptin's Theorem \cite[Theorem 7.1.3]{run} gives that $A(G)$ has a bounded approximate identity. It is well-known that $L^1(G)$ has a bounded approximate identity. Then by \cite[Proposition 2.4]{sahami left biprojective}, left $\phi\otimes\psi$-biprojectivity of  $A(G)\otimes_{p} L^1(G)$ implies that $A(G)\otimes_{p} L^1(G)$ is left $\phi\otimes\psi$-contractible. So using \cite[Theorem 3.14]{Nas} gives that $A(G)$ is left $\phi$-contractible. Then by \cite[Proposition 6.6]{Nas} $G$ is discrete. Also by \cite[Theorem 3.14]{Nas} $L^{1}(G)$ is left $\psi$-contractible. Then \cite[Theorem 6.1]{Nas} implies that $G$ is compact. It follows that $G$ is finite.
	
	Converse is clear.
\end{proof}
\begin{Proposition}
	Let $G$ be a locally compact  group. Then
	$A(G)\oplus_{1} L^1(G)$ is  left  character biprojective  if and only if $G$ is finite.
\end{Proposition}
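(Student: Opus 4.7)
My plan follows the pattern of the preceding propositions. Write $X=A(G)\oplus_{1}L^{1}(G)$. The characters of $X$ fall into two disjoint families: for each $\phi\in\Delta(A(G))$ the map $\chi_{\phi}(f,g)=\phi(f)$ is a character, and for each $\psi\in\Delta(L^{1}(G))$ the map $\chi_{\psi}(f,g)=\psi(g)$ is a character; these exhaust $\Delta(X)$. Left character biprojectivity of $X$ must therefore produce left $\chi_{\phi}$-biprojectivity and left $\chi_{\psi}$-biprojectivity for all such $\phi$ and $\psi$, and I will transfer these to left $\phi$-biprojectivity of $A(G)$ and left $\psi$-biprojectivity of $L^{1}(G)$ via the canonical coordinate projections.

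For the transfer, fix $\psi\in\Delta(L^{1}(G))$ and let $\rho:X\rightarrow X\otimes_{p}X$ witness left $\chi_{\psi}$-biprojectivity. Let $P_{A}$ and $P_{B}$ denote the canonical projections onto $A(G)$ and $L^{1}(G)$; each is a bounded algebra homomorphism because multiplication in $X$ is coordinatewise. Define
\[
\sigma_{B}:L^{1}(G)\rightarrow L^{1}(G)\otimes_{p}L^{1}(G),\qquad \sigma_{B}(g)=(P_{B}\otimes P_{B})(\rho(0,g)).
\]
The coordinatewise multiplication gives $(P_{B}\otimes P_{B})((0,g)\cdot z)=g\cdot(P_{B}\otimes P_{B})(z)$ for $z\in X\otimes_{p}X$, and one also has $\pi_{L^{1}(G)}\circ(P_{B}\otimes P_{B})=P_{B}\circ\pi_{X}$. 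Combining these compatibilities with the three defining identities of $\rho$ applied to arguments of the form $(0,g)$ shows that $\sigma_{B}$ witnesses left $\psi$-biprojectivity of $L^{1}(G)$. An analogous construction using $P_{A}$ and arguments of the form $(f,0)$ yields left $\phi$-biprojectivity of $A(G)$ for each $\phi\in\Delta(A(G))$.

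To conclude, $L^{1}(G)$ always has a bounded approximate identity, so by \cite[Proposition 2.4]{sahami left biprojective} left $\psi$-biprojectivity upgrades to left $\psi$-contractibility, and \cite[Theorem 6.2]{Nas} forces $G$ to be compact. Compactness implies amenability, hence by Leptin's theorem $A(G)$ has a bounded approximate identity as well; the same upgrade then yields left $\phi$-contractibility of $A(G)$ for every $\phi\in\Delta(A(G))$, and \cite[Proposition 6.6]{Nas} forces $G$ to be discrete. A group which is both compact and discrete is finite. For the converse, when $G$ is finite the algebra $X$ is finite-dimensional, unital, and semisimple, so each character $\chi$ has an associated idempotent $m\in X$ with $xm=\chi(x)m$ and $\chi(m)=1$, and $\rho(x)=\chi(x)\,m\otimes m$ directly witnesses left $\chi$-biprojectivity. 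The main obstacle I anticipate is making the bimodule-action bookkeeping behind $\sigma_{B}$ airtight (in particular verifying the intertwining with $P_{B}\otimes P_{B}$); everything else is then an assembly of previously cited theorems.
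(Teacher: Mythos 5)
Your argument is correct, and it arrives at the same two target facts as the paper (left $\phi$-contractibility of $A(G)$ and left $\psi$-contractibility of $L^{1}(G)$, hence $G$ discrete and compact), but the reduction is genuinely different. The paper does not project the biprojectivity map onto the summands: it first upgrades the whole algebra $A(G)\oplus_{1}L^{1}(G)$ to left $\chi$-contractibility using the central element $x_{0}=(a_{0},0)$ with $\phi(a_{0})=1$ (via \cite[Lemma 2.2]{sahami left biprojective}, exploiting commutativity of $A(G)$ rather than any approximate identity), and then restricts contractibility to the closed ideals $A(G)$ and, once discreteness is known, $\ell^{1}(G)$ by \cite[Proposition 3.8]{Nas}; so it obtains discreteness first and compactness second. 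You instead transport the biprojectivity witness itself through $P_{B}\otimes P_{B}$ --- legitimate, since the coordinate projections are algebra homomorphisms, they intertwine the module actions, satisfy $\pi_{L^{1}(G)}\circ(P_{B}\otimes P_{B})=P_{B}\circ\pi_{X}$, and $\chi_{\psi}=\psi\circ P_{B}$ --- and only afterwards upgrade each summand to contractibility using its bounded approximate identity. This forces your order of deduction: compactness first (because $L^{1}(G)$ always has a bounded approximate identity), then Leptin's theorem to equip $A(G)$ with one. Your route buys a reusable ``left $\phi$-biprojectivity passes to $\ell^{1}$-direct summands'' transfer, at the cost of an extra appeal to amenability of compact groups, which the paper's route avoids. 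Your explicit converse via the idempotent $m$ with $xm=\chi(x)m$ and $\chi(m)=1$ is correct and more detailed than the paper's ``Converse is clear.'' One citation nit: in the paper's other proofs, ``$L^{1}(G)$ left $\psi$-contractible implies $G$ compact'' is \cite[Theorem 6.1]{Nas} rather than Theorem 6.2 (which is invoked for $M(G)$), although the paper itself is not consistent on this point; also, your classification of all characters of the direct sum, while true, is not needed --- it suffices that both coordinate families consist of characters.
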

\begin{proof}
Suppose that $A(G)\oplus_{1} L^1(G)$ is left character biprojective. Let $\phi\in\Delta(A(G))$. Choose an element $a_{0}\in A(G)$ such that $\phi(a_{0})=1.$ Clearly the element $x_{0}=(a_{0},0)$ belongs to  $A(G)\oplus_{1} L^1(G)$ which $xx_{0}=x_{0}x$ and $\phi(x_{0})=1.$ Using \cite[Lemma 2.2]{sahami left biprojective}, left character biprojectivity of $A(G)\oplus_{1} L^1(G)$ implies that $A(G)\oplus_{1} L^1(G)$ is left $\phi$-contractible. Since $A(G)$ is a closed ideal in  	$A(G)\oplus_{1} L^1(G)$ and $\phi|_{A(G)}\neq 0$, by \cite[Proposition 3.8]{Nas}$A(G)$ is left $\phi$-contractible. So by \cite[Proposition 6.6]{Nas} $G$ is discrete. Thus $A(G)\oplus_{1} L^1(G)=A(G)\oplus_{1} \ell^1(G)$.
 We know that $\ell^1(G)$ has an identity $e$. Replacing $e$ with $a_{0}$ and $\psi$ with $\phi$ (for some $\psi\in\Delta(L^1(G))$) and following the same argument as above,  we can see that $\ell^1(G)$ is left $\psi$-contractible. Thus by \cite[Theorem 6.1]{Nas} $G$ is compact. Therefore $G$ must be finite.

 Converse is clear.
\end{proof}
A linear subspace $S(G)$ of $L^{1}(G)$ is said to be
a Segal algebra on $G$ if it satisfies the following conditions
\begin{enumerate}
	\item [(i)] $S(G)$ is  dense    in $L^{1}(G)$,
	\item [(ii)]  $S(G)$ with a norm $||\cdot||_{S(G)}$ is
	a Banach space and $|| f||_{L^{1}(G)}\leq|| f||_{S(G)}$ for every
	$f\in S(G)$,
	\item [(iii)] for $f\in S(G)$ and $y\in G$, we have $L_{y}(f)\in S(G)$ the map $y\mapsto L_{y} (f)$ from $G$ into $S(G)$ is continuous, where
	$L_{y}(f)(x)=f(y^{-1}x)$,
	\item [(iv)] $||L_{y}(f)||_{S(G)}=||f||_{S(G)}$ for every $f\in
	S(G)$ and $y\in G$.
\end{enumerate}
For various examples of Segal algebras, we refer the reader  to \cite{rei}. 

A locally compact group $G$ is called $SIN$, if it contains a foundamental family of compact invariant neibourhoods of the identity, see \cite[p. 86]{doran}.
\begin{Proposition}
	Let $G$ be a SIN  group. Then
	$S(G)\otimes_{p}S(G)$ is  left $\phi\otimes \psi$-biprojective  if and only if $G$ is compact, for some $\phi\in\Delta(S(G))$.
\end{Proposition}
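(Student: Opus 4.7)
The plan is to mirror the template used throughout this section: reduce left $\phi\otimes\psi$-biprojectivity of the tensor product to left $\phi\otimes\psi$-contractibility via a central-element trick, then to left $\phi$-contractibility of each tensor factor via \cite[Theorem 3.14]{Nas}, and finally translate that back into compactness of $G$ through a Segal-algebra version of \cite[Theorem 6.2]{Nas}. The SIN hypothesis is where the Segal-algebra adaptation has bite, so I would organise the argument around producing a central element of $S(G)$ on which $\phi$ does not vanish.

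For the forward direction, assume $S(G)\otimes_p S(G)$ is left $\phi\otimes\psi$-biprojective. The first step is to exhibit central elements $a_0,b_0\in S(G)$ with $\phi(a_0)=1=\psi(b_0)$. Since $G$ is SIN, $L^1(G)$ admits a central bounded approximate identity built from the normalised indicators of compact invariant neighbourhoods of $e$. Using density of $S(G)$ in $L^1(G)$ together with conditions (iii) and (iv) in the definition of a Segal algebra (continuity and isometry of translations $L_y$), one can average such an approximate identity against $S(G)$ to land a genuinely central element inside $S(G)$; rescaling then produces $a_0$ with $\phi(a_0)=1$, and likewise $b_0$. Set $x_0=a_0\otimes b_0$; this is central in $S(G)\otimes_p S(G)$ and satisfies $(\phi\otimes\psi)(x_0)=1$. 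Now \cite[Lemma 2.2]{sahami left biprojective} upgrades left $\phi\otimes\psi$-biprojectivity to left $\phi\otimes\psi$-contractibility, \cite[Theorem 3.14]{Nas} then forces $S(G)$ to be left $\phi$-contractible, and the Segal-algebra characterisation of compactness via left character contractibility (the direct analogue of \cite[Theorem 6.2]{Nas}, available in the author's previous work on Segal algebras) delivers that $G$ is compact.

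The converse is the routine half: when $G$ is compact, $S(G)$ is left $\phi$-contractible for the chosen character $\phi$, and then \cite[Theorem 3.14]{Nas} combined with \cite[Lemma 2.1]{sahami left biprojective} promotes this to left $\phi\otimes\psi$-biprojectivity of $S(G)\otimes_p S(G)$, exactly as in the proof of the $M(G)\otimes_p L^1(G)$ result above.

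The main obstacle I anticipate is the first step: normalised indicators $\chi_U/|U|$ are central in $L^1(G)$ for $U$ a compact invariant neighbourhood of $e$, but they are not a priori elements of $S(G)$. The standard workaround is to convolve such an indicator with a fixed $f\in S(G)$ (using that $S(G)$ is an $L^1(G)$-module under convolution) or to average $f$ over $G$-translates against a central $L^1$-unit, both of which remain in $S(G)$ by Segal-algebra axioms and yield elements commuting with all of $L^1(G)$, hence with all of $S(G)$. Verifying that one can simultaneously arrange $\phi(a_0)\neq 0$ — not automatic, since $\phi$ might vanish on the subspace of central elements for non-unimodular reasons — is the delicate point and is where the SIN assumption is genuinely used.
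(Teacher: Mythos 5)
Your proposal follows essentially the same route as the paper: central element with $\phi$-value $1$, then \cite[Lemma 2.2]{sahami left biprojective} to get left $\phi\otimes\psi$-contractibility, \cite[Theorem 3.14]{Nas} to descend to $S(G)$, and the Segal-algebra contractibility--compactness characterisation (the paper cites \cite[Theorem 3.3]{Ala}) to conclude. The only difference is that the paper dispenses with your hand-built averaging construction by directly citing Kotzmann--Rindler \cite{kot} for a central approximate identity $(e_\alpha)$ in $S(G)$, after which $\phi(e_\alpha)\to 1$ makes the nonvanishing of $\phi$ on a central element automatic rather than delicate.
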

\begin{proof}
Let $S(G)\otimes_{p}S(G)$ be left $\phi\otimes \phi$-biprojective. Since $G$ is a $SIN$ group, the main result of \cite{kot} gives that $S(G)$ has a central approximate identity. It follows that there exists an element $x_{0}\in S(G)$ such that $xx_{0}=x_{0}x$ and $\phi(x_{0})=1,$ for each $x\in S(G)$. Set $u_{0}=x_{0}\otimes x_{0}$. It is easy to see that $uu_{0}=u_{0}u$ and $\phi\otimes \phi(u_{0})=1,$ for every $u\in S(G)\otimes_{p}S(G).$ Using \cite[Lemma 2.2]{sahami left biprojective} left $\phi\otimes \phi$-biprojectivity of $S(G)\otimes_{p}S(G)$ follows that $S(G)\otimes_{p}S(G)$ is left  $\phi\otimes \psi$-contractible. By \cite[Theorem 3.14]{Nas} $S(G) $ is left $\phi$-contractible. Thus \cite[Theorem 3.3]{Ala} gives that $G$ is compact.

For converse, suppose that $G$ is compact. Then by \cite[Theorem 3.3]{Ala} $S(G) $ is left $\phi$-contractible. So by \cite[Theorem 3.14]{Nas} $S(G)\otimes_{p}S(G)$ be left $\phi\otimes \phi$-contractible. Applying \cite[Lemma 2.1]{sahami left biprojective} finishes the proof.
\end{proof}
\begin{small}
	
\end{small}

\begin{thebibliography}{99}
		\bibitem{Ala} M. Alaghmandan, R. Nasr Isfahani and M. Nemati, {\it Character amenability and contractibility of abstract Segal algebras}, Bull. Austral. Math. Soc, {\bf 82} (2010) 274-281.
		
		\bibitem{doran} R. S. Doran, J. Whichman, {\it Approximate identities and factorization in Banach modules}, Lecture Notes in Mathematics. {\bf 768},  Springer 1979.
		
		\bibitem{rost ch} M. Essmaili,  M. Rostami,  M. Amini, {\it A characterization of biflatness of
			Segal algebras based on a character},  Glas. Mat. Ser. III 51(71) (2016), 45-58.
		
		\bibitem{gha} F. Ghahramani, R. J. Loy and G. A. Willis, {\it Amenability and weak
			amenability of second conjugate Banach algebras}, Proc. Amer. Math. Vol 124 (1996).
		
		
		\bibitem{hew} E. Hewitt and K. A. Ross, Abstract harmonic analysis I,  Springer-Verlag, Berlin, (1963).
		
		\bibitem{kan} E. Kaniuth, A. T. Lau and J. Pym, {\it On $\phi$-amenablity of Banach algebras}, Math. Proc. Cambridge Philos. Soc. {\bf 144} (2008) 85-96.
		\bibitem{kot} E. Kotzmann and H. Rindler, {\it Segal algebras on non-abelian groups}, Trans. Amer.
		Math. Soc. {\bf 237} (1978), 271-281.
		\bibitem{Sang} M. S. Monfared, {\it Character amenability of Banach
			algebras}, Math. Proc. Camb. Philos. Soc. {\bf 144} (2008) 697-706.
		
		\bibitem{nem col} R. Nasr Isfahani and M. Nemati, {\it Character pseudo-amenability of Banach algebras}, Colloq. Math.{\bf 132} (2013),
		177-193.
		
		\bibitem{Nas} R. Nasr Isfahani and S. Soltani Renani, {\it Character contractibility of Banach algebras and homological
			properties of Banach modules}, Studia Math. {\bf 202} (3) (2011) 205-225.

		
\bibitem{sahami razi biflat} N. Razi, A. Pourabbas and A. Sahami; {\it On approximately biflat Banach algebras}, U.P.B. Sci. Bull., Series A, Vol. 79, Iss. 4, 2017, 29-41.
		\bibitem{rei} H. Reiter; {\it $L^{1}$-algebras and Segal Algebras}, Lecture Notes in Mathematics {\bf 231} (Springer, 1971).
		\bibitem{run} V. Runde, {\it  Lectures on amenability}, Springer, New York, 2002.
		\bibitem{sahami approximate left} A. Sahami and A. Pourabbas; {\it On approximate left $\phi$-biprojective Banach algebras}, Glas. Mat. vol 53, 2018, 187-203.
		\bibitem{sahami left biprojective}  A. Sahami; {\it Left $\phi$-biprojectivity of some Banach algebras}, Preprint.	
		
		\bibitem{sahami left biflat}  A. Sahami; {\it A note on left $\phi$-biflat Banach algebras}, Preprint.
		
	\end{thebibliography}
\end{document}